\begin{document}
\newtheorem{prop-defi}[smfthm]{Proposition-DÈfinition}
\newtheorem{notas}[smfthm]{Notations}
\newtheorem{nota}[smfthm]{Notation}
\newtheorem{defis}[smfthm]{DÈfinitions}
\newtheorem{hypo}[smfthm]{HypothËse}

\def\Xm{{\mathbb X}}
\def\Um{{\mathbb U}}
\def\Am{{\mathbb A}}
\def\Fm{{\mathbb F}}
\def\Mm{{\mathbb M}}
\def\Nm{{\mathbb N}}
\def\Pm{{\mathbb P}}
\def\Qm{{\mathbb Q}}
\def\Zm{{\mathbb Z}}
\def\Dm{{\mathbb D}}
\def\Cm{{\mathbb C}}
\def\Rm{{\mathbb R}}
\def\Gm{{\mathbb G}}
\def\Lm{{\mathbb L}}
\def\Km{{\mathbb K}}
\def\Om{{\mathbb O}}
\def\Em{{\mathbb E}}

\def\BC{{\mathcal B}}
\def\QC{{\mathcal Q}}
\def\TC{{\mathcal T}}
\def\ZC{{\mathcal Z}}
\def\AC{{\mathcal A}}
\def\CC{{\mathcal C}}
\def\DC{{\mathcal D}}
\def\EC{{\mathcal E}}
\def\FC{{\mathcal F}}
\def\GC{{\mathcal G}}
\def\HC{{\mathcal H}}
\def\IC{{\mathcal I}}
\def\JC{{\mathcal J}}
\def\KC{{\mathcal K}}
\def\LC{{\mathcal L}}
\def\MC{{\mathcal M}}
\def\NC{{\mathcal N}}
\def\OC{{\mathcal O}}
\def\PC{{\mathcal P}}
\def\UC{{\mathcal U}}
\def\VC{{\mathcal V}}
\def\XC{{\mathcal X}}
\def\SC{{\mathcal S}}
\def\RC{{\mathcal R}}

\def\BF{{\mathfrak B}}
\def\AF{{\mathfrak A}}
\def\GF{{\mathfrak G}}
\def\EF{{\mathfrak E}}
\def\CF{{\mathfrak C}}
\def\DF{{\mathfrak D}}
\def\JF{{\mathfrak J}}
\def\LF{{\mathfrak L}}
\def\MF{{\mathfrak M}}
\def\NF{{\mathfrak N}}
\def\XF{{\mathfrak X}}
\def\UF{{\mathfrak U}}
\def\KF{{\mathfrak K}}
\def\FF{{\mathfrak F}}

\def \longmapright#1{\smash{\mathop{\longrightarrow}\limits^{#1}}}
\def \mapright#1{\smash{\mathop{\rightarrow}\limits^{#1}}}
\def \lexp#1#2{\kern \scriptspace \vphantom{#2}^{#1}\kern-\scriptspace#2}
\def \linf#1#2{\kern \scriptspace \vphantom{#2}_{#1}\kern-\scriptspace#2}
\def \linexp#1#2#3 {\kern \scriptspace{#3}_{#1}^{#2} \kern-\scriptspace #3}

\def \coim {\mathop{\mathrm{Coim}}\nolimits}
\def \Ext{\mathop{\mathrm{Ext}}\nolimits}
\def \ad{\mathop{\mathrm{ad}}\nolimits}
\def \sh{\mathop{\mathrm{Sh}}\nolimits}
\def \irr{\mathop{\mathrm{Irr}}\nolimits}
\def \FH{\mathop{\mathrm{FH}}\nolimits}
\def \FPH{\mathop{\mathrm{FPH}}\nolimits}
\def \coh{\mathop{\mathrm{Coh}}\nolimits}
\def \res{\mathop{\mathrm{res}}\nolimits}
\def \op{\mathop{\mathrm{op}}\nolimits}
\def \rec {\mathop{\mathrm{rec}}\nolimits}
\def \art{\mathop{\mathrm{Art}}\nolimits}
\def \hyp {\mathop{\mathrm{Hyp}}\nolimits}
\def \cusp {\mathop{\mathrm{Cusp}}\nolimits}
\def \scusp {\mathop{\mathrm{Scusp}}\nolimits}
\def \Iw {\mathop{\mathrm{Iw}}\nolimits}
\def \JL {\mathop{\mathrm{JL}}\nolimits}
\def \speh {\mathop{\mathrm{Speh}}\nolimits}
\def \isom {\mathop{\mathrm{Isom}}\nolimits}
\def \Vect {\mathop{\mathrm{Vect}}\nolimits}
\def \groth {\mathop{\mathrm{Groth}}\nolimits}
\def \hom {\mathop{\mathrm{Hom}}\nolimits}
\def \deg {\mathop{\mathrm{deg}}\nolimits}
\def \val {\mathop{\mathrm{val}}\nolimits}
\def \det {\mathop{\mathrm{det}}\nolimits}
\def \rep {\mathop{\mathrm{Rep}}\nolimits}
\def \spec {\mathop{\mathrm{Spec}}\nolimits}
\def \fr {\mathop{\mathrm{Fr}}\nolimits}
\def \frob {\mathop{\mathrm{Frob}}\nolimits}
\def \ker {\mathop{\mathrm{Ker}}\nolimits}
\def \im {\mathop{\mathrm{Im}}\nolimits}
\def \Red {\mathop{\mathrm{Red}}\nolimits}
\def \red {\mathop{\mathrm{red}}\nolimits}
\def \aut {\mathop{\mathrm{Aut}}\nolimits}
\def \diag {\mathop{\mathrm{diag}}\nolimits}
\def \spf {\mathop{\mathrm{Spf}}\nolimits}
\def \Def {\mathop{\mathrm{Def}}\nolimits}
\def \twist {\mathop{\mathrm{Twist}}\nolimits}
\def \scusp {\mathop{\mathrm{Scusp}}\nolimits}
\def \Id {{\mathop{\mathrm{Id}}\nolimits}}
\def \lie {{\mathop{\mathrm{Lie}}\nolimits}}
\def \Ind{\mathop{\mathrm{Ind}}\nolimits}
\def \ind {\mathop{\mathrm{ind}}\nolimits}
\def \loc {\mathop{\mathrm{Loc}}\nolimits}
\def \top {\mathop{\mathrm{Top}}\nolimits}
\def \ker {\mathop{\mathrm{Ker}}\nolimits}
\def \coker {\mathop{\mathrm{Coker}}\nolimits}
\def \gal {{\mathop{\mathrm{Gal}}\nolimits}}
\def \Nr {{\mathop{\mathrm{Nr}}\nolimits}}
\def \rn {{\mathop{\mathrm{rn}}\nolimits}}
\def \tr {{\mathop{\mathrm{Tr~}}\nolimits}}
\def \Sp {{\mathop{\mathrm{Sp}}\nolimits}}
\def \st {{\mathop{\mathrm{St}}\nolimits}}
\def \sp{{\mathop{\mathrm{Sp}}\nolimits}}
\def \perv{\mathop{\mathrm{Perv}}\nolimits}
\def \tor {{\mathop{\mathrm{Tor}}\nolimits}}
\def \nrd {{\mathop{\mathrm{Nrd}}\nolimits}}
\def \nilp {{\mathop{\mathrm{Nilp}}\nolimits}}
\def \obj {{\mathop{\mathrm{Obj}}\nolimits}}
\def \cl {{\mathop{\mathrm{cl}}\nolimits}}
\def \gr {{\mathop{\mathrm{gr}}\nolimits}}
\def\HT{{\mathop{\mathcal{HT}}\nolimits}}

\def \rem{{\noindent\textit{Remarque:~}}}
\def \ext {{\mathop{\mathrm{Ext}}\nolimits}}
\def \End {{\mathop{\mathrm{End}}\nolimits}}

\def\semi{\mathrel{>\!\!\!\triangleleft}}
\let \DS=\displaystyle

\def \hi{\HC}

\setcounter{secnumdepth}{3} \setcounter{tocdepth}{3}

\def \Fil{\mathop{\mathrm{Fil}}\nolimits}
\def \CoFil{\mathop{\mathrm{CoFil}}\nolimits}
\def \Fill{\mathop{\mathrm{Fill}}\nolimits}
\def \CoFill{\mathop{\mathrm{CoFill}}\nolimits}
\def\SF{{\mathfrak S}}
\def\PF{{\mathfrak P}}
\def \EFil{\mathop{\mathrm{EFil}}\nolimits}
\def \ECoFil{\mathop{\mathrm{ECoFil}}\nolimits}
\def \EFill{\mathop{\mathrm{EFill}}\nolimits}
\def \FP{\mathop{\mathrm{FP}}\nolimits}

\let \longto=\longrightarrow
\let \oo=\infty

\let \d=\delta
\let \k=\kappa

\newcommand{\marque}{\addtocounter{smfthm}{1}
{\smallskip \noindent \textit{\thesmfthm}~---~}}

\renewcommand\atop[2]{\ensuremath{\genfrac..{0pt}{1}{#1}{#2}}}

\title[Faisceaux pervers entiers d'Harris-Taylor]{Sur les extensions intermÈdiaires des systËmes locaux d'Harris-Taylor}

\alttitle{On intermediate extensions of Harrris-Taylor's local systems}

\author{Boyer Pascal}
\email{boyer@math.univ-paris13.fr}
\address{ArShiFo ANR-10-BLAN-0114}

\frontmatter

\begin{abstract}
Dans le contexte des variÈtÈs de Shimura unitaires simples ÈtudiÈes par Harris et Taylor,
nous avons construit dans \cite{boyer-LT} deux filtrations du faisceau pervers des cycles
Èvanescents. Les graduÈs de la premiËre sont les $p$-extensions intermÈdiaires de certains
systËmes locaux dits d'Harris-Taylor tandis que ceux de la seconde, obtenue par dualitÈ,
sont les $p+$-extensions intermÈdiaires. Dans ce papier nous dÈcrivons
la diffÈrence entre ces $p$ et $p+$ extensions intermÈdiaires. PrÈcisÈment nous
montrons que lorsque le systËme local d'Harris-Taylor est associÈ ‡ une reprÈsentation
irrÈductible cuspidale dont la rÈduction modulo $l$ est supercuspidale, ces extensions
intermÈdiaires sont les mÍmes. Dans le cas o˘ la rÈduction modulo $l$ n'est que
cuspidale nous dÈcrivons la $l$-torsion de leur diffÈrence.

\end{abstract}

\begin{altabstract}
In the geometric situation of some simple unitary Shimura varieties studied by Harris and 
Taylor, we have built in \cite{boyer-LT} two filtrations of the perverse sheaf of vanishing cycles.
The graduate of the first are the $p$-intermediate extension of some local Harris-Taylor's local
systems, while for the second, obtained by duality, they are the $p+$-intermediate extensions.
In this work, we describe the difference between these $p$ and $p+$ intermediate 
extension. Precisely, we show, in the case where the local system is associated to an 
irreducible cuspidal representation whose reduction modulo $l$ is supercuspidal, that
the two intermediate extensions are the same. Otherwise, if the reduction modulo $l$
is just cuspidal, we describe the $l$-torsion of their difference.

\end{altabstract}

\subjclass{14G22, 14G35, 11G09, 11G35,\\ 11R39, 14L05, 11G45, 11Fxx}

\keywords{VariÈtÈs de Shimura, modules formels, correspondances de Langlands, correspondances de
Jacquet-Langlands, faisceaux pervers, cycles Èvanescents, filtration de monodromie, conjecture de
monodromie-poids}

\altkeywords{Shimura varieties, formal modules, Langlands correspondences, Jacquet-Langlands
correspondences,
monodromy filtration, weight-monodromy conjecture, perverse sheaves, vanishing cycles}

\maketitle

\pagestyle{headings} \pagenumbering{arabic}

\section*{Introduction}
\renewcommand{\theequation}{\arabic{equation}}
\backmatter

Cet article s'inscrit dans un programme visant ‡ prouver le lemme d'Ihara pour $U(n,1)$. 
Avant d'en dÈcrire les rÈsultats principaux, dÈtaillons la stratÈgie de ce programme dont le lecteur
trouvera plus de dÈtails dans cf. \cite{boyer-ihara}.
\begin{itemize}
\item La premiËre Ètape consiste ‡ traduire l'ÈnoncÈ du lemme d'Ihara en un ÈnoncÈ similaire portant
sur la cohomologie d'une variÈtÈ de Shimura unitaire de type Kottwitz-Harris-Taylor.

\item Pour Ètudier la $\overline \Zm_l$-cohomologie de ces variÈtÈs de Shimura, on utilise la
suite spectrale des cycles Èvanescents en une place $p \neq l$. Le faisceau pervers des cycles
Èvanescents est dÈcoupÈ sur $\overline \Zm_l$ en termes de versions entiËres des
extensions intermÈdiaires des systËmes locaux d'Harris-Taylor. On est alors amenÈ ‡
contrÙler la torsion dans la cohomologie de ces $\overline \Zm_l$-systËmes locaux d'Harris-Taylor. 

\item La torsion de la cohomologie de toute la variÈtÈ de Shimura est plus facilement contrÙlable car on 
peut la calculer en utilisant une place annexe quelconque, en particulier o˘ il y a bonne rÈduction. 
Quitte ‡ localiser en un idÈal maximal bien choisi d'une algËbre de Hecke, on montre dans \cite{boyer-imj} 
que la cohomologie est concentrÈe en degrÈ mÈdian et sans torsion.

\item Dans \cite{boyer-LT} nous avons construit deux filtrations du faisceau pervers des cycles 
Èvanescents, les graduÈs de la premiËre s'identifiant aux $p$-extensions intermÈdiaires des systËmes 
locaux d'Harris-Taylor alors que ceux de la seconde, obtenue par dualitÈ,
sont les $p+$-extensions intermÈdiaires. 

\item En utilisant que les strates de Newton sont affines et que la cohomologie de la variÈtÈ de Shimura,
aprËs localisation, est sans torsion, il n'est pas trop difficile de contrÙler la cohomologie
de ces $p$-extensions intermÈdiaires avant le degrÈ mÈdian. La dualitÈ de Verdier permet alors
d'obtenir que la cohomologie des $p+$-extensions intermÈdiaires est aussi sans torsion
en degrÈ supÈrieur au degrÈ mÈdian, de sorte que, dans le cas o˘ les $p$ et $p+$ extensions 
intermÈdiaires coÔncident, on obtient la nullitÈ de la torsion en tout degrÈ.
\end{itemize}

Le but de ce travail est ainsi de comprendre la diffÈrence entre ces deux versions $p$ et $p+$
des faisceaux pervers d'Harris-Taylor. Le rÈsultat principal donnÈ au thÈorËme 
\ref{theo-principal}, est qu'elles sont Ègales lorsque la reprÈsentation irrÈductible cuspidale
associÈe au systËme local possËde une rÈduction modulo $l$ qui est supercuspidale.
En revanche dans le cas o˘ cette rÈduction modulo $l$ est simplement cuspidale,
ces deux extensions intermÈdiaires sont distinctes et on en dÈcrit la $l$-torsion
de leur quotient au \S \ref{para-cuspi} ‡ l'aide de la description de la rÈduction modulo $l$
des reprÈsentations de Steinberg dÈcrites dans \cite{boyer-repmodl}. Nous verrons dans \cite{boyer-ihara}
que cette description est suffisante pour contrÙler la torsion en tout degrÈ.

En ce qui concerne l'organisation du papier, on commence par des rappels sur la gÈomÈtrie
\S \ref{para-shimura} et sur les systËmes locaux d'Harris-Taylor, \S \ref{para-HT}, puis on
Ènonce le rÈsultat principal, thÈorËme \ref{theo-principal}. Pour simplifier la dÈmonstration,
on commence, proposition \ref{prop-decompo}, par dÈcouper $\Psi_\IC$ selon les
$\overline \Fm_l$-reprÈsentations irrÈductibles supercuspidales d'un $GL_g(F_v)$ pour 
$1 \leq g \leq d$, cf. la dÈfinition \ref{defi-tau-type} et le thÈorËme \ref{theo-jl} de l'appendice.
Au \S \ref{para-cuspi}, nous Ètudions enfin le cas non supercuspidal en dÈcrivant,
proposition \ref{prop-dec-pervers}, la $l$-torsion du quotient 
entre les $p+$ et $p$ versions des faisceaux pervers d'Harris-Taylor. Nous finissons
en dÈcrivant les faisceaux de cohomologie de la rÈduction 
modulo $l$ d'un faisceau pervers d'Harris-Taylor, lesquels sont connus d'aprËs le rÈsultat
principal de \cite{boyer-LT}: en effet dans le cas non supercuspidal, la filtration de
stratification exhaustice de celui-ci, donnÈe par la proposition  \ref{prop-dec-pervers}, 
fournit une suite spectrale reflÈtÈe par les complexes dits d'induction du 
\S \ref{para-complexe}.

\tableofcontents

\mainmatter

\renewcommand{\theequation}{\arabic{section}.\arabic{subsection}.\arabic{smfthm}}

\section{Faisceaux pervers d'Harris-Taylor entiers}

\subsection{Rappels sur quelques variÈtÈs de Shimura unitaires}
\label{para-shimura}

Soit $F=F^+ E$ un corps CM avec $E/\Qm$ quadratique imaginaire, dont on fixe 
un plongement rÈel $\tau:F^+ \hookrightarrow \Rm$. 

\begin{nota} Pour toute place finie $w$ de $F$, on note $F_w$ le complÈtÈ de $F$ en cette place,
$\OC_w$ son anneau des entiers d'idÈal maximal $\PC_w$ et de corps rÈsiduel $\kappa(w)$.
\end{nota}

Soit $B$ une algËbre ‡ 
division centrale sur $F$ de dimension $d^2$ telle qu'en toute place $x$ de $F$,
$B_x$ est soit dÈcomposÈe soit une algËbre ‡ division et on suppose $B$ 
munie d'une involution de
seconde espËce $*$ telle que $*_{|F}$ est la conjugaison complexe $c$. Pour
$\beta \in B^{*=-1}$, on note $\sharp_\beta$ l'involution $x \mapsto x^{\sharp_\beta}=\beta x^*
\beta^{-1}$ et $G/\Qm$ le groupe de similitudes, notÈ $G_\tau$ dans \cite{h-t}, dÈfini
pour toute $\Qm$-algËbre $R$ par 
$$
G(R)  \simeq   \{ (\lambda,g) \in R^\times \times (B^{op} \otimes_\Qm R)^\times  \hbox{ tel que } 
gg^{\sharp_\beta}=\lambda \}
$$
avec $B^{op}=B \otimes_{F,c} F$. 
Dans \cite{h-t}, les auteurs justifient l'existence de $G$ comme ci-dessus tel qu'en outre
\begin{itemize}
\item les signatures de $G(\Rm)$ sont $(1,d-1)$ pour le plongement $\tau$ et
$(0,d)$ pour les autres;

\item pour $p=u \lexp c u$ dÈcomposÈ dans $E$,
$$G(\Qm_p) \simeq (\Qm_p)^\times \times \prod_{i=1}^r (B_{v_i}^{op})^\times$$ 
o˘ $v=v_1,v_2,\cdots,v_r$ sont les places de $F$ au dessus de la place $u$ de $E$.
\end{itemize}

Pour tout sous-groupe compact $U^p$ de $G(\Am^{\oo,p})$ et $m=(m_1,\cdots,m_r) \in \Zm_{\geq 0}^r$, on pose
$$U^p(m)=U^p \times \Zm_p^\times \times \prod_{i=1}^r \ker ( \OC_{B_{v_i}}^\times \longto
(\OC_{B_{v_i}}/\PC_{v_i}^{m_i})^\times )$$

\begin{nota} \label{nota-m1}
On note $\IC$ l'ensemble des sous-groupes compacts ouverts $U^p(m)$ tels qu'il existe une place $x$ pour laquelle la projection de $U^p$ sur $G(\Qm_x)$ ne contienne
aucun ÈlÈment d'ordre fini autre que l'identitÈ, cf. \cite{h-t} bas de la page 90.
Pour $m$ comme ci-dessus, on a une application
$$m_1: \IC \longrightarrow \Nm.$$
\end{nota}

\begin{defi}
Pour tout $I \in \IC$, on note $X_I \rightarrow \spec \OC_v$ \og la variÈtÈ de Shimura
associÈe ‡ $G$\fg{} construite dans \cite{h-t} et $X_\IC=(X_I)_{I \in \IC}$
le schÈma de Hecke relativement au groupe  $G(\Am^\oo)$, au sens de \cite{boyer-invent2}
\end{defi}

\rem les morphismes de restriction du niveau $r_{J,I}:X_J \rightarrow X_I$ sont finis et plats.
et mÍme Ètales quand $m_1(J)=m_1(I)$.

\begin{notas} (cf. \cite{boyer-invent2} \S 1.3) \label{nota-strate}
Pour $I \in \IC$, on note:
\begin{itemize}
\item $X_{I,s}$ la fibre spÈciale de $X_I$ et $X_{I,\bar s}:=X_{I,s} \times \spec \overline \Fm_p$ 
la fibre spÈciale gÈomÈtrique.

\item Pour tout $1 \leq h \leq d$, $X_{I,\bar s}^{\geq h}$ (resp. $X_{I,\bar s}^{=h}$)
dÈsigne la strate fermÈe (resp. ouverte) de Newton de hauteur $h$, i.e. le sous-schÈma dont la
partie connexe du groupe de Barsotti-Tate en chacun de ses points gÈomÈtriques
est de rang $\geq h$ (resp. Ègal ‡ $h$).

\item On notera aussi $X^{\geq 0}_{I,\bar s}:=X_I$.
\end{itemize}
\end{notas}

\rem pour tout $1 \leq h \leq d$, la strate de Newton de hauteur $h$ est
de pure dimension $d-h$; le systËme projectif associÈ dÈfinit alors
un schÈma de Hecke $X_{\IC,\bar s}^{\geq h}$ (resp. $X_{\IC,\bar s}^{=h}$)
pour $\Gm=G(\Am^\oo)$, cf. \cite{h-t} III.4.4, lisse 
dans le cas de bonne rÈduction, i.e. quand $m_1=0$.

%

\begin{notas} Nous utiliserons les notations suivantes:
$$i^h:X^{\geq h}_{\IC,\bar s} \hookrightarrow X^{\geq 1}_{\IC,\bar s}, \quad
j^{\geq h}: X^{=h}_{\IC,\bar s} \hookrightarrow X^{\geq h}_{\IC,\bar s}$$
ainsi que $j^{=h}:=i^h \circ j^{\geq h}$.
\end{notas}

\subsection{SystËmes locaux d'Harris-Taylor}
\label{para-HT}

\begin{nota} Pour tout $h \geq 1$, on note $D_{v,h}$ l'algËbre ‡ division centrale sur $F_v$
d'invariant $1/h$.
\end{nota}

\begin{defi}
Pour $\pi_v$ une reprÈsentation irrÈductible cuspidale de $GL_g(F_v)$ et $t \geq 1$, on note
$\pi_v[t]_D$ la reprÈsentation de $D_{v,tg}^\times$ associÈe ‡ $\st_t(\pi_v)$ par
la correspondance de Jacquet-Langlands. 
\end{defi}

\rem toute reprÈsentation irrÈductible de $D_{v,h}^\times$ est de la forme $\pi_v[t]_D$ 
pour $h=tg$.

\begin{defi} \label{defi-LC}
Dans \cite{h-t}, les auteurs, via les variÈtÈs d'Igusa de premiËre et seconde
espËce, associent ‡ toute reprÈsentation $\rho_v$ de l'ordre maximal $\DC_{v,h}^\times$
de $D_{v,h}^\times$, un systËme local $\LC(\rho_v)$ sur $X^{=h}_{\IC,\bar s}$.
\end{defi}

\begin{nota}
Pour $\pi_v$ une $\overline \Qm_l$-reprÈsentation irrÈductible cuspidale de $GL_g(F_v)$
et $t \geq 1$, on note $\LC(\pi_v,t)$ le $\overline \Qm_l$-systËme local sur $X^{=tg}_{\IC,\bar s}$ 
associÈ ‡ la restriction de $\pi_v[t]_D$ ‡ $\DC_{v,tg}^\times$. 
Si $\Gamma$ est un rÈseau stable de $\pi_v[t]_D$, on notera
$$\LC_\Gamma(\pi_v,t)$$
le $\overline \Zm_l$-systËme local associÈ. 
\end{nota} 

\begin{defi} \label{defi-syst-local-type}
Un $\overline \Zm_l$-systËme local $\LC$ sur $X^{=h}_{\IC,\bar s}$ sera dit de type 
$\varrho$, pour $\varrho$ une $\overline \Fm_l$-reprÈsentation irrÈductible supercuspidale de 
$GL_g(F_v)$ avec $g|h$, si $\LC$ est libre et
$\LC \otimes_{\overline \Zm_l} \overline \Qm_l$ est une somme directe de systËmes locaux 
$\LC(\rho_v)$ o˘ $\rho_v$ est une $\overline \Qm_l$-reprÈsentation de $\DC_{v,h}^\times$
de type $\varrho$ au sens de la dÈfinition \ref{defi-tau-type}.
\end{defi}

Rappelons, cf. \cite{boyer-invent2}, qu'un systËme local d'Harris-Taylor $\LC$ sur
$X^{=h}_{\IC,\bar s}$ est induit, i.e.
$$\LC:= \LC_1 \times_{P_{h,d}(F_v)} GL_d(F_v),$$
o˘ 
\begin{itemize}
\item $X^{=h}_{\IC,\bar s,1}$ est une rÈunion de composantes irrÈductibles de 
$X^{=h}_{\IC,\bar s}$ munie d'une action du parabolique standard $P_{h,d}(F_v)$ de Levi
$GL_h(F_v) \times GL_{d-h}(F_v)$,

\item $\LC_1$ est la restriction de $\LC$ ‡ la strate $X^{=h}_{\IC,\bar s,1}$.
\end{itemize}
Le systËme local $\LC_1$ est muni, cf. \cite{boyer-invent2} \S 1.4.2, d'une action de 
$G(\Am^{\oo,p}) \times P_{h,d}(F_v) \times \Zm$ tel que le sous-groupe unipotent
de $P_{h,d}(F_v)$ agit trivialement alors que l'action du facteur $GL_h(F_v)$ de son Levi
agit via $\val\circ \det: GL_h(F_v) \twoheadrightarrow \Zm$. 

\begin{nota} \label{nota-HT} 
Pour $\Pi_t$ une reprÈsentation de $GL_h(F_v)$, on introduit alors 
$$HT(\pi_v,\Pi_t)(n):=\LC(\pi_v,t)[d-tg] \otimes \Pi_t \otimes \Xi^{\frac{tg-d+n}{2}} \otimes \Lm(\pi_v )$$
o˘ $\Lm^\vee$ est la correspondance Langlands sur $F_v$, 
$$\Xi:\frac{1}{2} \Zm \longrightarrow \overline \Zm_l^\times$$ 
est dÈfinie par $\Xi(\frac{1}{2})=q^{1/2}$ et
\begin{itemize}
\item $GL_h(F_v)$ agit diagonalement sur $\Pi_t$ et sur $\LC(\pi_v,t) \otimes \Xi^{\frac{tg-d+n}{2}}$ 
via son quotient $GL_h(F_v) \twoheadrightarrow \Zm$,

\item le groupe de Weil $W_v$ en $v$ agit diagonalement sur $\Lm(\pi_v)$ et le facteur
$\Xi^{\frac{tg-d+n}{2}}$ via l'application $\deg: W_v \twoheadrightarrow \Zm$ qui envoie les
frobenius gÈomÈtriques sur $1$.
\end{itemize}
Une $\overline \Zm_l$-version entiËre sera notÈe $HT_\Gamma(\pi_v,\Pi_t)(n)$ o˘ $\Gamma$
dÈsigne un rÈseau stable, par forcÈment sous la forme d'un produit tensoriel.
\end{nota}

\rem on rappelle que $\pi'_v$ est inertiellement Èquivalente ‡ $\pi_v$ si et seulement
s'il existe un caractËre $\zeta: \Zm \longrightarrow  \overline \Qm_l^\times$ tel que 
$\pi'_v \simeq \pi_v \otimes (\zeta \circ \val \circ \det)$.
Les faisceaux pervers $HT(\pi_v,\Pi_t)(n)$ ne dÈpendent que de la classe d'Èquivalence inertielle de 
$\pi_v$ et sont de la forme 
$$HT(\pi_v,\Pi_t)(n)=e_{\pi_v} \HT(\pi_v,\Pi_t)(n)$$ 
o˘ $\HT(\pi_v,\Pi_t)(n)$ est un faisceau pervers irrÈductible.

\subsection{\'EnoncÈ dans le cas supercuspidal}
\label{para-enonce}

Rappelons que pour $X$ un $\Fm_p$-schÈma et 
$\Lambda=\overline \Qm_l,\overline \Zm_l,\overline \Fm_l$, la $t$-structure usuelle sur 
la catÈgorie dÈrivÈe $D^b_c(X,\Lambda)$ est dÈfinie par:
$$\begin{array}{l}
A \in \lexp p D^{\leq 0}(X,\Lambda)
\Leftrightarrow \forall x \in X,~\hi^k i_x^* A=0,~\forall k >- \dim \overline{\{ x \} } \\
A \in \lexp p D^{\geq 0}(X,\Lambda) \Leftrightarrow \forall x \in X,~\hi^k i_x^! A=0,~\forall k <- 
\dim \overline{\{ x \} }
\end{array}$$
o˘ $i_x:\spec \kappa(x) \hookrightarrow X$ et $\hi^k$ dÈsigne le $k$-iËme faisceau de cohomologie.
On note alors $\lexp p \CC(X,\Lambda)$ le c{\oe}ur de cette $t$-structure: c'est une catÈgorie abÈlienne noethÈrienne et $\Lambda$-linÈaire.

\begin{nota} Les foncteurs cohomologiques associÈs ‡ la $t$-structure perverse ci-avant seront notÈs 
$\lexp p \hi^i$.
\end{nota}

Pour $\Lambda$ un corps, cette $t$-structure est autoduale pour la dualitÈ de Verdier.
Pour $\Lambda=\overline \Zm_l$, on peut munir la catÈgorie abÈlienne $\overline \Zm_l$-linÈaire 
$\lexp p \CC(X,\Lambda)$ d'une thÈorie de torsion $(\TC,\FC)$ o˘ 
$\TC$ (resp. $\FC$) est la sous-catÈgorie
pleine des objets de torsion $T$ (resp. libres $F$) , i.e. tels que $l^N 1_T$ est nul pour $N$ assez grand (resp. $l.1_F$ est un monomorphisme).

\begin{defi} Soit
$$\begin{array}{l}
\lexp {p+} \DC^{\leq 0}(X,\overline \Zm_l):= \{ A \in \lexp p \DC^{\leq 1}(X,\overline \Zm_l):~
\lexp p \hi^1(A) \in \TC \} \\
\lexp {p+} \DC^{\geq 0}(X,\overline \Zm_l):= \{ A \in  \lexp p \DC^{\geq 0}(X,\overline \Zm_l):~
\lexp p \hi^0(A) \in \FC \} \\
\end{array}$$
la $t$-structure duale de c\oe ur $\lexp {p+} \CC(X,\overline \Zm_l)$  muni de sa thÈorie de torsion 
$(\FC,\TC[-1])$ \og duale \fg{} de celle de $\lexp p \CC(X,\overline \Zm_l)$.
\end{defi}

\rem d'aprËs \cite{boyer-torsion} \S 1.3, la sous-catÈgorie pleine $\FC$ de $\lexp p \CC(X,\Lambda)$
est quasi-abÈlienne, i.e. elle admet des noyaux, images, conoyaux et coimages mais la flËche
naturelle 
$$\coim_\FC f \longrightarrow \im_\FC f$$ 
de la coimage vers l'image, n'est pas nÈcessairement
un isomorphisme: si c'est le cas on dit que le morphisme $f$ est \emph{strict}.
Pour $j:U \hookrightarrow X$ une immersion ouverte, on dispose en outre de deux notions d'extensions intermÈdiaires
$$\lexp p j_{!*} \quad \hbox{ et } \quad \lexp {p+} j_{!*}.$$
Le rÈsultat principal que nous allons montrer est donnÈ par le thÈorËme suivant.

\begin{theo} \phantomsection \label{theo-principal}
Soit $\pi_v$ une reprÈsentation irrÈductible cuspidale de $GL_g(F_v)$ telle que
sa rÈduction modulo $l$ est supercuspidale alors pour tout $1 \leq t \leq s=\lfloor \frac{d}{g}
\rfloor$, et pour tout rÈseau stable $\Gamma$ de $\pi_v[t]_D$, on a
$$\lexp p j^{\geq tg}_{!*} \LC_{\Gamma}(\pi_v,t)[d-tg] \simeq \lexp {p+}
j^{\geq tg}_{!*} \LC_{\Gamma}(\pi_v,t)[d-tg].$$
\end{theo}

\rem comme la rÈduction modulo
$l$ de $\pi_v[t]_D$ est irrÈductible, l'indÈpendance relativement au rÈseau $\Gamma$
considÈrÈ est immÈdiate.  Cependant la preuve de ce rÈsultat passe par un ÈnoncÈ similaire
portant sur les systËmes locaux $HT_\Gamma(\pi_v,\Pi_t)(n)$ o˘ le rÈseau est donnÈ par
le faisceau pervers des cycles Èvanescents. PrÈcisÈment nous prouverons la proposition suivante.

\begin{prop} \label{prop-principal}
Soit $\pi_v$ une reprÈsentation irrÈductible cuspidale de $GL_g(F_v)$ telle que
sa rÈduction modulo $l$ est supercuspidale. Alors pour tout $1 \leq t \leq s=\lfloor \frac{d}{g} \rfloor$
il existe une reprÈsentation $\Pi_t$ de $GL_{tg}(F_v)$, un entier $n$ ainsi qu'un rÈseau stable $\Gamma$
de $\HT(\pi_v,\Pi_t)(n)$ tel que 
$$\lexp p j^{=tg}_{!*} \HT_\Gamma(\pi_v,\Pi_t)(n) \simeq \lexp {p+} j^{=tg}_{!*} 
\HT_\Gamma(\pi_v,\Pi_t)(n).$$
\end{prop}

Le thÈorËme dÈcoule alors du lemme suivant en considÈrant le rÈseau produit tensoriel, i.e.
la propriÈtÈ de la proposition prÈcÈdente, ne dÈpend pas de $\Pi_t$ ou 
de $n$ mais seulement d'une structure entiËre quelconque du systËme local $\LC(\pi_v,t)$.

\begin{lemm} \label{lem-principal} S'il existe un rÈseau stable $\Gamma$ de
$\HT(\pi_v,\Pi_t)(n)$ tel que la proposition prÈcÈdente est valable, alors le rÈsultat est valable
pour tout rÈseau stable $\Gamma'$ de $\HT(\pi_v,\Pi_t)(n)$.
\end{lemm}

\begin{proof}
On raisonne par rÈcurrence sur $t$ de $d$ ‡ $s$. Le cas $t=s$ dÈcoule des ÈgalitÈs
$$\lexp p j^{=sg}_{!*} \HT_{\Gamma'}(\pi_v,\Pi_t)(n)= j^{=sg}_! \HT_{\Gamma'}(\pi_v,\Pi_t)(n)
= j^{=sg}_* \HT_{\Gamma'}(\pi_v,\Pi_t)(n) = \lexp {p+} j^{=sg}_{!*} \HT_{\Gamma'}(\pi_v,\Pi_t)(n).$$
Supposons alors le rÈsultat acquis jusqu'au rang $t+1$.
D'aprËs \cite{juteau}, on a les triangles distinguÈs
$$\lexp p j^{=tg}_{!*} \HT_\Gamma(\pi_v,\Pi_t)(n) \longrightarrow 
\lexp {p+} j^{=tg}_{!*} \HT_\Gamma(\pi_v,\Pi_t)(n) \longrightarrow
i^{tg}_* \lexp p \hi^0_{tor} i^{tg,*} j^{=tg}_* \HT_\Gamma(\pi_v,\Pi_t)(n) [1] \leadsto$$
ainsi que pour le foncteur $\Fm:=- \otimes^{\Lm}_{\overline \Zm_l} \overline \Fm_l$
$$\Fm \lexp p j^{=tg}_{!*} \HT_\Gamma(\pi_v,\Pi_t)(n) \longrightarrow j^{=tg}_{!*} 
\Fm \HT_\Gamma(\pi_v,\Pi_t)(n) \longrightarrow \lexp p \hi^0 \Fm \bigl ( 
i^{tg}_* \lexp p \hi^0_{tor} i^{tg,*} j^{=tg}_* \HT_\Gamma(\pi_v,\Pi_t)(n) \bigr ) \leadsto$$
de sorte que l'ÈgalitÈ des extensions intermÈdiaires pour $\Gamma'$ revient ‡ demander que
$\Fm \lexp p j^{=tg}_{!*} \HT_{\Gamma'}(\pi_v,\Pi_t)(n) \simeq j^{=tg}_{!*}  \Fm \HT_{\Gamma'}
(\pi_v,\Pi_t)(n)$.
Rappelons la suite exacte courte
$$0 \rightarrow i^{tg}_* \lexp p \hi^{-1} i^{tg,*} j^{=tg}_* \Fm  \HT_\Gamma(\pi_v,\Pi_t)(n) 
\longrightarrow j^{=tg}_! \Fm \HT_\Gamma(\pi_v,\Pi_t)(n) \longrightarrow
j^{=tg}_{!*} \Fm  \HT_\Gamma(\pi_v,\Pi_t)(n)  \rightarrow 0.$$
Comme $j^{\geq tg}$ est affine, les foncteurs $j^{=tg}_!$, $j^{=tg}_*$ sont exacts
tout comme $i^{tg,*}$ et $i'^{tg}_*$. Ainsi le foncteur $\Fm$ commute avec ceux-ci et comme la torsion
de $\lexp p \hi^0 i^{tg,*}j^{=tg}_* \HT_\Gamma(\pi_v,\Pi_t)(n)$ est nulle alors
$$ i^{tg}_* \lexp p \hi^{-1} i^{tg,*} j^{=tg}_* \Fm  \HT_\Gamma(\pi_v,\Pi_t)(n) \simeq 
\Fm \bigl (  i^{tg}_* \lexp p \hi^{-1} i^{tg,*} j^{=tg}_* \HT_\Gamma(\pi_v,\Pi_t)(n) \bigr ).$$
Comme $ \lexp p \hi^{-1} i^{tg,*} j^{=tg}_* = \lexp p \hi^{-1} i^{tg,*} j^{=tg}_{!*}$ et que
$j^{=tg}_{!*} \Fm \HT_\Gamma(\pi_v,\Pi_t)(n)$ est semi-simple en tant que faisceau, 
on en dÈduit que, dans le groupe de Grothendieck,
 $\Bigl [ i^{tg}_* \lexp p \hi^{-1} i^{tg,*} j^{=tg}_* \Fm  \HT_{\Gamma}(\pi_v,\Pi_t)(n) \Bigr ] $
ne dÈpend pas du rÈseau $\Gamma$. Du diagramme
$$\xymatrix{
\Fm \bigl ( i^{tg}_* \lexp p \hi^{-1} i^{tg,*} j^{=tg}_* \HT_{\Gamma'}(\pi_v,\Pi_t)(n) \bigr ) \ar@^{^{(}->}[r] & 
\Fm \bigl ( j^{=tg}_! \HT_{\Gamma'}(\pi_v,\Pi_t)(n) \bigr ) \ar@{->>}[r] &
\Fm \bigl ( j^{=tg}_{!*} \HT_{\Gamma'} (\pi_v,\Pi_t)(n) \bigr ) \\
 i^{tg}_* \lexp p \hi^{-1} i^{tg,*} j^{=tg}_*  \Fm \HT_{\Gamma'}(\pi_v,\Pi_t)(n) \ar@^{^{(}->}[r] & 
j^{=tg}_! \Fm \HT_{\Gamma'}(\pi_v,\Pi_t)(n) \bigr ) \ar@{->>}[r] \ar@{=}[u] &
j^{=tg}_{!*} \Fm \HT_{\Gamma'} (\pi_v,\Pi_t)(n) \bigr )
}$$
on remarque qu'il suffit alors de montrer que 
$\Bigl [ \Fm \bigl (  i^{tg}_* \lexp p \hi^{-1} i^{tg,*} j^{=tg}_* \HT_{\Gamma'}(\pi_v,\Pi_t)(n) \bigr ) \Bigr ]$
ne dÈpend pas de $\Gamma'$. 

D'aprËs \cite{boyer-invent2} le faisceau pervers libre
$ i^{tg}_* \lexp p \hi^{-1} i^{tg,*} j^{=tg}_* \HT_{\Gamma'}(\pi_v,\Pi_t)(n)$ est, sur $\overline \Qm_l$, 
extension de faisceaux pervers de la forme $j^{=t'g}_{!*} \HT(\pi_v,\Pi_{t'})(n')$, lesquels 
sur $\overline \Zm_l$, d'aprËs l'hypothËse de rÈcurrence, ne possËdent qu'une notion d'extension
intermÈdiaire. Ainsi donc
leur image par $\Fm$ est $j^{=t'g}_{!*} \Fm \HT(\pi_v,\Pi_{t'})(n')$ qui ne dÈpend pas, dans le groupe de
Grothendieck, du rÈseau stable associÈ. MoralitÈ l'image dans le groupe de Grothendieck de 
$\Fm \bigl (  i^{tg}_* \lexp p \hi^{-1} i^{tg,*} j^{=tg}_* \HT_{\Gamma'}(\pi_v,\Pi_t)(n) \bigr )$
est indÈpendante de $\Gamma'$, d'o˘ le rÈsultat.

\end{proof}

\section{Sur le faisceau pervers des cycles proches}

\subsection{Rappels}
\label{para-rappel}

Pour $\Lambda=\overline \Qm_l,\overline \Zm_l,\overline \Fm_l$ et pour tout $I \in \IC$, 
les faisceaux pervers des cycles Èvanescents
$R\Psi_{\eta_v,I}(\Lambda)[d-1](\frac{d-1}{2})$ sur $X_{I,\bar s}$
dÈfinissent un $W_v$-faisceau pervers de Hecke, au sens de la dÈfinition 1.3.6
de \cite{boyer-invent2}, que l'on note $\Psi_{\IC,\Lambda}$.

\rem dans le cas o˘ $\Lambda=\overline \Zm_l$, on notera simplement $\Psi_\IC$.

Rappelons, cf. \cite{boyer-invent2} \S 2.4, que la restriction 
$\Bigl ( \Psi_{\IC,\Lambda} \Bigr )_{|X^{=h}_{\IC,\bar s}}$ du faisceau pervers des
cycles proches ‡ la strate $X^{=h}_{\IC,\bar s}$, est munie d'une action de 
$(D_{v,h}^\times)^0:=\ker \Bigl ( \val \circ \rn: D_{v,h}^\times \longrightarrow \Zm \Bigr )$
et de $\varpi_v^\Zm$ que l'on voit plongÈ dans $F_v^\times \subset D_{v,h}^\times$.

\begin{prop} \label{prop-fbartau} 
(cf. \cite{h-t} proposition IV.2.2 et le \S 2.4 de \cite{boyer-invent2}) \\
On a un isomorphisme $G(\Am^{\oo,v}) \times P_{h,d-h}(F_v) \times W_v$-Èquivariant\footnote{Noter 
le dÈcalage $[d-1]$ dans la dÈfinition de $\Psi_{\IC,\overline \Zm_l}$.} 
$$\ind_{(D_{v,h}^\times)^0 \varpi_v^\Zm}^{D_{v,h}^\times} 
\Bigl ( \hi^{h-d-i} \Psi_{\IC,\overline \Zm_l} \Bigr )_{|X^{=h}_{\IC,\bar s}} \simeq
\bigoplus_{\bar \tau \in \RC_{\overline \Fm_l}(h)} \LC_{\overline \Zm_l}(\UC_{\bar \tau,\Nm}^{h-1-i})$$
o˘ $\LC_{\overline \Zm_l}(\UC^{h-1}_{\bar \tau,\Nm})$ est le systËme local de la dÈfinition \ref{defi-LC}
associÈ ‡ la $D_{v,h}^\times$-reprÈsentation\footnote{La correspondance entre le systËme indexÈ par 
$\IC$ et $\Nm$ est donnÈe par l'application $m_1$ de \ref{nota-m1}.} 
$\UC^\bullet_{\bar \tau,\Nm}={\displaystyle \lim_{\rightarrow} ~\UC^\bullet_{\bar \tau,n}}$
o˘ $\UC_{\bar \tau,n}^\bullet$ est le $\bar \tau$-facteur isotypique de la $D^\times_{v,h}$-reprÈsentation
admissible $\UC^\bullet_n:=H^\bullet (\MC_{LT,n}^{h/F_v},\overline \Zm_l)$ obtenue comme 
la cohomologie de la fibre gÈnÈrique gÈomÈtrique
$$\MC_{LT,n}^{h/F_v}:=\MC_{LT,h,n} \hat \otimes_{\hat F_v^{nr}} \hat{\overline F_v}$$
du schÈma formel de Lubin-Tate reprÈsentant les classes d'isomorphismes des dÈformations par
quasi-isogÈnies du $\OC_v$-module formel de hauteur $h$ et de dimension $1$.
\end{prop}

\begin{nota} \label{nota-L5}
Pour $\bar \tau \in \RC_{ \overline \Fm_l}(h)$,
on notera $\LC_{\overline \Zm_l}(\bar \tau)$ pour $\LC_{\overline \Zm_l} (\UC^{h-1}_{\bar \tau,\Nm})$.
\end{nota}

\begin{defi} Soit $\varrho$ une $\overline \Fm_l$-reprÈsentation irrÈductible supercuspidale
de $GL_g(F_v)$. On note alors $\loc(\varrho)$ le plus petit ensemble de systËmes locaux
sur les strates de Newton ouvertes $X^{=tg}_{\IC,\bar s}$ pour $1 \leq tg \leq d$ tel que
\begin{itemize}
\item pour tout $\bar \tau \in \RC_{ \overline \Fm_l}(h,\varrho)$,
$\loc(\varrho)$ contient les $\LC_{ \overline \Zm_l}(\bar \tau)$;

\item il est stable par le processus suivant: pour $\LC \in \loc(\varrho)$ un systËme 
local sur $X^{=h}_{\IC,\bar s}$ et un Èpimorphisme
strict $j^{= h}_! \LC[d-h] \twoheadrightarrow F$ de noyau $P_F$, les systËmes locaux de la filtration de 
stratification exhaustive de $P_F$ appartiennent ‡ $\loc(\varrho)$. 

\item si $\LC \in \loc(\varrho)$ alors tout rÈseau stable de 
$\LC \otimes_{ \overline \Zm_l} \overline \Qm_l$ appartient aussi ‡ $\loc(\varrho)$.
\end{itemize}
\end{defi}

\rem en particulier $\loc(\varrho)$ contient tous les systËmes locaux d'Harris-Taylor entiers
$\LC_{\Gamma}(\pi_v,t)$ o˘, cf. \ref{defi-tautype}, $\pi_v \in \scusp_i(\varrho)$ avec
$-1 \leq i \leq s(\varrho)=\frac{d}{g(\varrho)}$.

\begin{prop} \label{prop-hyp1} ( cf. \cite{boyer-LT} proposition 2.3.7) \\ 
Soit $\LC \in \loc(\varrho)$ ‡ support dans $X^{=h}_{\IC,\bar s}$ et soit 
$P_\LC$ le noyau de $j^{= h}_{!} \LC[d-h] \twoheadrightarrow \lexp p j^{=h}_{!*} \LC[d-h]$. On note $h'$
minimal tel que $j^{=h',*} P_\LC$ est non nul alors le morphisme d'adjonction
$$j^{=h'}_! j^{= h',*} P_{\LC}  \longrightarrow P_{\LC}$$
est surjectif dans $\lexp p \CC$.
\end{prop}

On note $X^{1 \leq h}:=X^{\geq 1}-X^{\geq h+1}$ et
$j^{1 \leq h}:X^{1†\leq h} \hookrightarrow X^{\geq 1}$. On dÈfinit alors
$$0=\Fil^0_!(\Psi_{\IC}) \subset \Fil^1_!(\Psi_{\IC}) \subset \cdots \subset
\Fil^d_{!}(\Psi_{\IC})=\Psi_{\IC}$$
la filtration de stratification de $\Psi_{\IC}$ au sens de \cite{boyer-torsion}, o˘ pour $L$
est un faisceau pervers libre on note, pour tout $1 \leq h \leq d$, 
$$\Fil^h_{!}(L):=\im_\FC \Bigl ( \lexp {p+} j^{1 \leq h}_! j^{1 \leq h,*} L \longrightarrow L \Bigr ),$$
autrement dit $\Fil^{h}_!(L)/\Fil^{h-1}_!(L)$
est l'image dans $\FC$ du morphisme d'adjonction
$$j^{=h}_! j^{= h,*} \Bigl ( L/\Fil^{h-1}_!L) \Bigr ) \longrightarrow  L/\Fil^{h-1}_!(L)$$
Le rÈsultat principal de \cite{boyer-LT}, rappelÈ dans la proposition suivante, 
est que, pour $L=\Psi_\IC$, ces images dans $\FC$ sont Ègales aux mÍmes coimages.

\begin{prop} \label{prop-hyp2} (cf. \cite{boyer-LT} proposition 2.4.5) \\
La filtration de stratification de $\Psi_\IC$ est saturÈe, i.e. pour tout $1 \leq h \leq d$, 
le conoyau du morphisme d'adjonction
$$j^{= h}_!  j^{= h,*} \Bigl ( \Psi_\IC/\Fil^{h-1}_!(\Psi_{\IC}) \Bigr ) 
\longrightarrow \Fil^h_!(\Psi_{\IC,\bar \tau}) /\Fil^{h-1}_!(\Psi_{\IC,\bar \tau})$$ 
est libre.
\end{prop}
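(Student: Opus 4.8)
\medskip
\noindent\emph{Esquisse de d\'emonstration.}\quad
La strat\'egie propos\'ee consiste \`a raisonner par r\'ecurrence sur $h$, en suivant la construction m\^eme de la filtration $\Fil^\bullet_\SF(\Psi_\IC)$ et en utilisant de fa\c{c}on essentielle les deux ingr\'edients que sont la description explicite des faisceaux de cohomologie de $\Psi_\IC$ sur les strates (proposition~\ref{prop-fbartau}) et la surjectivit\'e \og forte \fg{} des morphismes d'adjonction attach\'es aux \og queues \fg{} $P_\LC$ (proposition~\ref{prop-hyp1}). On commence par se ramener, $\bar\tau$ par $\bar\tau$, au facteur direct $\Psi_{\IC,\bar\tau}$, le d\'ecoupage de $\Psi_\IC$ selon les $\bar\Fm_l$-repr\'esentations irr\'eductibles supercuspidales ne cr\'eant aucune torsion. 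Comme le conoyau consid\'er\'e est d'image nulle dans $\lexp{p+}\CC$, il est automatiquement de torsion, de sorte que sa libert\'e \'equivaut \`a sa nullit\'e, i.e. \`a ce qu'\`a chaque cran l'image du morphisme d'adjonction dans $\lexp{p+}\CC$ co\"incide avec son image dans $\lexp p\CC$.

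\smallskip
Pour l'h\'er\'edit\'e, on suppose que $C_{h-1}:=\Psi_{\IC,\bar\tau}/\Fil^{h-1}_\SF(\Psi_{\IC,\bar\tau})$ est libre, \`a support dans $X^{\geq h}_{\IC,\bar s}$, et que, d'apr\`es la proposition~\ref{prop-fbartau}, sa restriction \`a la strate ouverte $X^{=h}_{\IC,\bar s}$ est une somme directe de syst\`emes locaux d'Harris-Taylor $\FC_{\bar\Zm_l}(h,\bar\tau)$ convenablement d\'ecal\'es et tous libres, la libert\'e d\'ecoulant du scindage $\UC^i\simeq\bigoplus_{\bar\tau}\rho_{v,\bar\tau,i}$ de la proposition~\ref{prop-scindage}. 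On d\'evisse alors le morphisme d'adjonction
$$f_h:\ j^{\geq h}_!\, j^{\geq h,*}\, C_{h-1}\longrightarrow C_{h-1}$$
\`a l'aide des suites exactes courtes
$$0\to P_{h,\bar\tau}\longrightarrow j^{\geq h}_!\, \FC_{\bar\Zm_l}(h,\bar\tau)[d-h]\longrightarrow \lexp p j^{\geq h}_{!*}\, \FC_{\bar\Zm_l}(h,\bar\tau)[d-h]\to 0,$$
o\`u $P_{h,\bar\tau}$ est libre, \`a support dans $X^{\geq h+g}_{\IC,\bar s}$, et dont les localisations successives fournissent les syst\`emes locaux de $\loc(h',\bar\tau)$. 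La partie \og $p$-extension interm\'ediaire \fg{} se recollant sans torsion sur $C_{h-1}$ gr\^ace \`a la libert\'e des faisceaux de cohomologie de $\Psi_\IC$ sur les strates (proposition~\ref{prop-fbartau}), et le conoyau de $f_h$ \'etant \`a support dans $X^{\geq h+1}_{\IC,\bar s}$ puisque $f_h$ est inversible sur $X^{=h}_{\IC,\bar s}$, la seule torsion susceptible d'y appara\^itre proviendrait des queues $P_{h,\bar\tau}$.

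\smallskip
L'\'etape cl\'e serait alors d'invoquer la proposition~\ref{prop-hyp1}: pour tout $\LC\in\loc(h',\bar\tau)$, le morphisme d'adjonction $j^{\geq h'+1}_!\, j^{\geq h'+1,*}P_\LC\to P_\LC$ est surjectif \emph{dans $\lexp p\CC$}, et pas seulement dans $\lexp{p+}\CC$. En propageant cette surjectivit\'e de strate en strate --- via la suite spectrale des faisceaux pervers de cohomologie des c\^ones des morphismes d'adjonction successifs --- et en utilisant \`a nouveau la libert\'e de $\Psi_\IC$ sur les strates, une chasse au diagramme dans la cat\'egorie ab\'elienne $\lexp p\CC$ donnerait que l'image de $f_h$ dans $\lexp{p+}\CC$ co\"incide avec son image dans $\lexp p\CC$: c'est exactement la nullit\'e, donc la libert\'e, du conoyau \'enonc\'e. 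L'h\'er\'edit\'e --- $C_h:=\Psi_{\IC,\bar\tau}/\Fil^h_\SF(\Psi_{\IC,\bar\tau})$ libre, de restriction \`a $X^{=h+1}_{\IC,\bar s}$ \`a nouveau contr\^ol\'ee par la proposition~\ref{prop-fbartau} --- en r\'esulte. Il serait commode, en parall\`ele, de confronter ce d\'evissage \`a la situation sur $\bar\Qm_l$, o\`u la filtration de monodromie est connue, et \`a la r\'eduction modulo $l$.

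\smallskip
Le principal obstacle est pr\'ecis\'ement le contr\^ole, \`a chaque cran, de la $l$-torsion parasite que pourrait introduire la \og saturation \fg{} implicite dans le choix de prendre l'image dans $\lexp{p+}\CC$ plut\^ot que dans $\lexp p\CC$: la seule connaissance des faisceaux de cohomologie de $\Psi_\IC$ ne suffit pas, il faut disposer du fait que les morphismes d'adjonction sont \og aussi surjectifs que possible \fg{} d\'ej\`a dans la $t$-structure perverse usuelle, ce qui est le contenu de la proposition~\ref{prop-hyp1}, puis organiser une r\'ecurrence soign\'ee sur les faisceaux pervers de cohomologie des c\^ones successifs de ces morphismes d'adjonction.
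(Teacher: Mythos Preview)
The paper does not contain its own proof of this proposition: it is stated with the reference \og cf.\ \cite{boyer-LT} proposition 2.2.1 \fg{} and is used as an input, together with proposition~\ref{prop-hyp1}, to prove the supercuspidal decomposition (proposition~\ref{prop-decompo}) and then the main theorem. So there is no proof in the present paper to compare your sketch against; the actual argument lives in \cite{boyer-LT}.

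That said, your sketch has a genuine circularity within the logical structure of this paper. You begin by passing to the direct factor $\Psi_{\IC,\bar\tau}$, i.e.\ you invoke the decomposition $\Psi_\IC \simeq \bigoplus_{\bar\pi}\Psi_{\IC,\bar\pi}$. But that decomposition is precisely proposition~\ref{prop-decompo}, and its proof in \S 2.2 explicitly relies on proposition~\ref{prop-hyp2} (and on~\ref{prop-hyp1}) to control the graded pieces of the stratification filtration. So you cannot use the $\bar\tau$- (or $\bar\pi$-) splitting of $\Psi_\IC$ as a first reduction step here; at this stage of the paper only the splitting of the \emph{cohomology sheaves on each stratum} (proposition~\ref{prop-fbartau}, via~\ref{prop-scindage}) is available, not the splitting of the perverse sheaf $\Psi_\IC$ itself. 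The appearance of $\Psi_{\IC,\bar\tau}$ in the displayed formula of the statement is a notational slip of the paper, not a licence to assume the decomposition.

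Beyond this, the heart of your sketch --- propagating the $\lexp p\CC$-surjectivity of proposition~\ref{prop-hyp1} through the successive cones by a diagram chase --- is plausible in spirit but is exactly the nontrivial content of \cite{boyer-LT}; as written it does not isolate the mechanism that prevents torsion from being created when one passes from the image in $\lexp{p+}\CC$ to the image in $\lexp p\CC$. Your reformulation \og libre $\Leftrightarrow$ nul \fg{} for the cokernel is correct and useful, but the inductive step would need a precise statement about the freeness of $\lexp p h^0 i^{h+1,*}(\Psi_\IC/\Fil^{h-1}_\SF)$, which is in fact what \cite{boyer-LT} establishes directly.
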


\rem selon \cite{boyer-torsion}, on peut considÈrer la cofiltration de stratification
$$\Psi_\IC = \CoFil_{*,d}(\Psi_\IC) \twoheadrightarrow \CoFil_{*,d-1}(\Psi_\IC) \twoheadrightarrow \cdots 
\twoheadrightarrow \CoFil_{*,1}(\Psi_\IC) \twoheadrightarrow \CoFil_{*,0}(\Psi_\IC)=0$$
o˘, cf. \cite{boyer-torsion} proposition 2.2.5, 
pour tout faisceau pervers libre $L$ et $1 \leq h \leq d$, on note
$$\CoFil_{*,h}(L)=\coim_\FC \Bigl ( L \longrightarrow \lexp {p} j^{1 \leq h}_* j^{1 \leq h,*} L \Bigr ),$$
autrement dit
$\ker \Bigl (\CoFil_{*,h}(L) \twoheadrightarrow \CoFil_{*,h-1} (L) \Bigr )$ est le noyau dans $\FC$ de
$$\ker \bigl (L \twoheadrightarrow \CoFil_{*,h-1} (L) \bigr )
\longrightarrow j^{= h}_* j^{= h,*} \Bigl ( \ker \bigl (L \twoheadrightarrow \CoFil_{!,h-1} (L) 
\bigr ) \Bigr ).$$
Le dual de la proposition prÈcÈdente est que, pour $L=\Psi_\IC$, ce noyau dans
$\lexp p \CC$ est aussi un noyau dans  $\lexp {p+} \CC$.

\subsection{DÈcomposition supercuspidale}

D'aprËs le thÈorËme \ref{theo-jl}, toute $\overline \Fm_l$-reprÈsentation irrÈductible de $D_{v,d}^\times$
est associÈe ‡ une  $\overline \Fm_l$-reprÈsentation irrÈductible supercuspidale 
$\varrho$ de $GL_{g}(F_{v})$ pour $g$ un diviseur de $d=sg$. 
Ainsi la dÈcomposition de la proposition \ref{prop-fbartau} se raffine en une dÈcomposition, cf. la formule
\ref{eq-tau-type}
$$\ind_{(D_{v,h}^\times)^0 \varpi_v^\Zm}^{D_{v,h}^\times} 
\Bigl ( \hi^{h-d-i} \Psi_{\IC,\overline \Zm_l} \Bigr )_{|X^{=h}_{\IC,\bar s}} \simeq
\bigoplus_{g|h} ~\bigoplus_{\varrho \in \scusp_{F_v}(g)} ~ 
\bigoplus_{\bar \tau \in \RC_{\overline \Fm_l}(h,\varrho)}
\LC_{\overline \Zm_l} (\UC_{\bar \tau,\Nm}^{h-1-i}).$$
Le but de ce paragraphe est de montrer le rÈsultat suivant.

\begin{prop} \label{prop-decompo}
Il existe une dÈcomposition
$$\Psi_{\IC} \simeq \bigoplus_{1 \leq g \leq d}\bigoplus_{\varrho \in \scusp_{F_v}(g)} 
\Psi_{\IC,\varrho}$$
o˘ pour tout $\varrho \in \scusp_{F_v}(g)$, le facteur direct $\Psi_{\IC,\varrho}$ est de type $\varrho$
au sens de la dÈfinition \ref{defi-syst-local-type}.
\end{prop}

\begin{proof}
On reprend la filtration de stratification de $\Psi_\IC$
$$0=\Fil^0_!(\Psi_{\IC}) \subset \Fil^1_!(\Psi_{\IC}) \subset \cdots \subset
\Fil^d_{!}(\Psi_{\IC})=\Psi_{\IC}$$
et on raisonne par rÈcurrence sur $r$ de $0$ ‡ $d$ en supposant qu'une telle dÈcomposition
existe pour $\Fil^r_!(\Psi_{\IC})$. Le cas de $r=0$ Ètant clair, supposons le rÈsultat acquis
pour $r-1$ et montrons le pour $r$. On note $\gr^r_!(\Psi_\IC)$ le quotient
$\Fil^r_!(\Psi_{\IC}) / \Fil^{r-1}_!(\Psi_{\IC})$ dont on rappelle que d'aprËs \cite{boyer-LT}
il est isomorphe ‡ $\lexp p \hi^0 i^{r,*} \Psi_\IC$. D'aprËs la proposition \ref{prop-hyp2}, le
morphisme d'adjonction
$$j^{= r}_! j^{= r,*} \gr^r_!(\Psi_\IC) \longrightarrow \gr^r_!(\Psi_\IC)$$
est surjectif dans $\lexp p \CC$ avec
$$j^{= r,*} \gr^r_!(\Psi_\IC) \simeq  \bigoplus_{g|r}\bigoplus_{\varrho \in \scusp_{F_v}(g)} \LC_{r,\varrho},$$
o˘ $\LC_{r,\varrho}$ est un systËme local sur la strate $X^{=r}_{\IC,\bar s}$ de type
$\varrho$ au sens de \ref{defi-syst-local-type}. On peut ainsi Ècrire
$$\gr^r_!(\Psi_\IC) \simeq \bigoplus_{g|r}\bigoplus_{\varrho \in \scusp_{F_v}(g)} 
\gr^r_{!,\varrho} (\Psi_\IC)$$
avec $j^{= r}_! \LC_{r,\varrho}[d-r] \twoheadrightarrow \gr^r_{!,\varrho} (\Psi_\IC)$
dans $\lexp p \CC$. En outre d'aprËs la proposition \ref{prop-hyp1}, 
$\gr^r_{!,\varrho} (\Psi_\IC)$ admet une filtration dont les graduÈs sont des
$\lexp p j^{= r'}_{!*}$ extensions intermÈdiaires de certains systËmes locaux de type
$\varrho$ au sens de \ref{nota-rhoi}, i.e. $\gr^r_{!,\varrho} (\Psi_\IC)$  est un faisceau pervers de type
$\varrho$.

\begin{lemm} Soient 
\begin{itemize}
\item des $\overline \Fm_l$-reprÈsentations irrÈductibles supercuspidales non isomorphes $\varrho$
et $\varrho'$,

\item $\bar \tau$ et $\bar \tau'$ des $\overline \Fm_l$-reprÈsentations
irrÈductibles respectivement de type $\varrho$ et $\varrho'$ au sens de la dÈfinition 
\ref{defi-tau-type} et 

\item $P$ un $p$-faisceau pervers sans torsion tel que
$$0 \rightarrow A' \longrightarrow P \longrightarrow A \rightarrow 0$$
\end{itemize}
o˘:
\begin{itemize}
\item il existe $h$ et $h'$ ainsi que deux systËmes locaux $\LC$ et $\LC'$ sur respectivement
$X^{=h}_{\IC,\bar s}$ et $X^{=h'}_{\IC,\bar s}$, respectivement de type $\varrho$ et 
$\varrho'$, tels que

\item $A$ (resp. $A'$) est isomorphe ‡ $\lexp p j^{= h}_{!*} \LC[d-h]$;
(resp. $\lexp p j^{= h'}_{!*} \LC'[d-h']$).
\end{itemize}
Alors $P \simeq A \oplus A'$.
\end{lemm}

\begin{proof}
Le cas $h=h'$ dÈcoule de la proposition \ref{prop-scindage}. Supposons pour commencer
que $h>h'$ de sorte que $X^{\geq h}_{\IC,\bar s} \subset X^{\geq h'}_{\IC,\bar s}$ et 
traitons tout d'abord le cas o˘ les coefficients sont $\overline \Qm_l$. La flËche d'adjonction
$$P \longrightarrow j^{= h'}_* j^{= h',*} P$$
a, d'aprËs \cite{boyer-invent2}, pour image $A'$ d'o˘ le rÈsultat. Revenons aux coefficients
$\overline \Zm_l$ et considÈrons la $t$-structure $\tilde p$ obtenue en recollant
\begin{itemize}
\item la $t$ structure usuelle $p$, cf. le dÈbut du \S \ref{para-enonce}, sur
l'ouvert $X^{\geq h'}_{\IC,\bar s}-X^{\geq h}_{\IC,\bar s}$ avec 

\item la $t$-structure $p[-1]$
sur $X^{\geq h}_{\IC,\bar s}$ o˘ $p$ est encore la $t$-structure usuelle. 
\end{itemize}
Notons alors que $A[-1]$ est $\tilde p$-pervers ainsi que $A'$ puisque
$$\lexp p j^{\geq h'}_{!*} \LC'[d-h'] = \lexp {\tilde p} j^{h' \leq h}_{!} \Bigl ( 
\lexp p j^{h' \leadsto h}_{!*} \LC'[d-h'] \Bigr )$$
o˘ 
$$j^{h' \leq h}: X^{\geq h'}_{\IC,\bar s}-X^{\geq h}_{\IC,\bar s} \hookrightarrow
X^{\geq h'}_{\IC,\bar s} 
\quad \hbox{ et } \quad 
j^{h' \leadsto h}:X^{=h'}_{\IC,\bar s} \hookrightarrow 
X^{\geq h'}_{\IC,\bar s} - X^{\geq h}_{\IC,\bar s}.$$
Ainsi $P$ dÈfinit une flËche $A[-1] \longrightarrow A'$ de $\tilde p$-faisceaux pervers
dont l'image est, d'aprËs le cas de $\overline \Qm_l$, 
contenue dans la torsion de $i^{h'}_*\lexp {\tilde p} j^{h' \leq h}_{!} \Bigl ( 
\lexp p j^{h' \leadsto h}_{!*} \LC'[d-h'] \Bigr )$ laquelle est donc ‡ support dans
$X^{\geq h+1}_{\IC,\bar s}$. On conclut en notant que toute flËche de 
$\lexp p j^{\geq h}_{!*} \LC[d-h]$
dans un faisceau pervers ‡ support dans $X^{\geq h+1}_{\IC,\bar s}$ est nulle.

ConsidÈrons ‡ prÈsent le cas $h<h'$. Pour les coefficients $\overline \Qm_l$, l'image
du morphisme d'adjonction
$$j^{= h}_! j^{= h,*} P \longrightarrow P$$
est, d'aprËs \cite{boyer-invent2}, isomorphe ‡ $A$ d'o˘ le rÈsultat. Pour les coefficients
$\overline \Zm_l$, considÈrons la $t$-structure $\tilde p$ obtenue en recollant
celle usuelle $p$ sur $X^{\geq h}_{\IC,\bar s}-X^{\geq h'}_{\IC,\bar s}$ avec la 
$t$-structure $p[1]$ sur $X^{\geq h'}_{\IC,\bar s}$ o˘ $p$ est encore la $t$-structure usuelle.
L'extension $P$ dÈfinit alors une flËche entre $\tilde p$ faisceaux pervers
$A \longrightarrow A'[1]$ o˘ 
$$A \simeq \lexp p j^{= h}_{!*} \LC[d-h] \simeq i^{h'}_* \lexp {\tilde p} j^{h \leq h'}_{!} 
\Bigl ( \lexp p j^{h \leadsto h'}_{!*} \LC[d-h] \Bigr )$$
est un faisceau pervers sans torsion. Ainsi la nullitÈ de cette flËche dÈcoule du cas 
des coefficients $\overline \Qm_l$ traitÈ prÈcÈdemment.
\end{proof}

ConsidÈrons alors
$$\xymatrix{
0 \ar[r] & \Fil^{r-1}_!(\Psi_{\IC}) \ar[r] & \Fil^r_!(\Psi_{\IC}) \ar[r] & 
\gr^r_{!} (\Psi_\IC) \ar[r] & 0 \\
0 \ar[r] & \bigoplus_{1 \leq g \leq d} \bigoplus_{\varrho \in \scusp_v(g)} 
\Fil^{r-1}_{!,\varrho} (\Psi_{\IC}) 
\ar@{=}[u] \ar[r] & P_{\varrho_0} \ar@{-->}[r] \ar@{-->}[u] & \gr^r_{!,\varrho_0} (\Psi_\IC) \ar[u]
\ar[r] & 0
}$$
D'aprËs les propositions \ref{prop-hyp1} et \ref{prop-hyp2}, $\gr^r_{!,\varrho_0} (\Psi_\IC)$
admet une filtration dont les graduÈs sont sans torsion et isomorphes ‡ des $p$-extensions
intermÈdiaires de systËmes locaux de type $\varrho_0$. De mÍme pour tout $\varrho$,
le faisceau pervers $\Fil^{r-1}_{!,\bar \tau} (\Psi_{\IC})$ admet une filtration dont les 
graduÈs sont sans torsion et isomorphes ‡ des $p$-extensions intermÈdiaires de
systËmes locaux de type $\varrho$. Il rÈsulte alors du lemme prÈcÈdent que 
$P_{\varrho_0}$ s'Ècrit comme une somme directe
$$P_{\varrho_0} \simeq  \Fil^{r}_{!,\varrho_0} (\Psi_{\IC}) \oplus 
\bigoplus_{\varrho \not \simeq \varrho_0}  \Fil^{r-1}_{!,\varrho} (\Psi_{\IC}).$$
En rÈpÈtant le raisonnement prÈcÈdent pour tous les $\varrho$, on en dÈduit le rÈsultat.
\end{proof}

\subsection{Preuve de la proposition \ref{prop-principal}}

CommenÁons par le lemme suivant.

\begin{lemm}
Pour tout $1 \leq h \leq d$ et pour tout $\varrho$, les $p$ et $p+$ extensions intermÈdiaires
de $j^{1 \leq h,*} \Psi_{\IC,\varrho}$ sont les mÍmes, i.e.
$$\lexp p j^{1 \leq h}_{!*} j^{1 \leq h,*} \Psi_{\IC,\varrho} =
\lexp {p+} j^{1 \leq h}_{!*} j^{1 \leq h,*} \Psi_{\IC,\varrho}.$$
\end{lemm}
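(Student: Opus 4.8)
The strategy is to reduce the equality of the $p$ and $p+$ intermediate extensions to the results of \cite{boyer-LT} recalled in Propositions \ref{prop-hyp1} and \ref{prop-hyp2}. Write $j=j^{1\leq h}$ for the open immersion $X^{\geq h}_{\IC,\bar s}\hookrightarrow X^{\geq 1}_{\IC,\bar s}$ and set $\LC:=j^{1\leq h,*}\Psi_{\IC,\bar\pi}$, which is a free perverse sheaf on the closed Newton stratum $X^{\geq h}_{\IC,\bar s}$ (concentrated in the relevant perverse degree after the usual shift). Recall the general criterion: $\lexp p j_{!*}\LC=\lexp{p+}j_{!*}\LC$ precisely when the canonical surjection $j_!\LC\twoheadrightarrow \lexp p j_{!*}\LC$ has kernel $P$ whose image under the adjunction $j^{\geq h+1}_!j^{\geq h+1,*}P\to P$ is all of $P$ \emph{and} the dual statement holds; equivalently, it suffices to show that the cokernel $P$ of $j_!\LC\to \lexp{p+}j_{!*}\LC$ (computed in $\lexp p\CC$) is torsion-free, since then $\lexp{p+}j_{!*}\LC$ already lies in $\lexp p\CC$ and the two intermediate extensions coincide. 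So the whole problem is a torsion-freeness statement about the graded pieces supported on deeper strata.

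First I would invoke Proposition \ref{prop-decompo} to work one supercuspidal block $\Psi_{\IC,\bar\pi}$ at a time, so that $\LC$ is a successive extension of $p$-intermediate extensions of Harris--Taylor local systems of type $\bar\pi$, all of whose associated cuspidal representations $\pi_v$ of $GL_{g'}(F_v)$ have supercuspidal reduction (this is where the hypothesis of Theorem \ref{theo-principal} enters, via Proposition \ref{prop-decompo}). Next, I would run the argument of the proof of Proposition \ref{prop-decompo}: the restriction $\LC$ to the stratum $X^{\geq h}_{\IC,\bar s}$ carries, by Proposition \ref{prop-hyp2} (saturation), a filtration $\Fil^\bullet$ whose graded pieces $\gr^{h'}$ fit into surjections $j^{\geq h'}_!\LC_{h'}[\dim]\twoheadrightarrow \gr^{h'}$ in $\lexp p\CC$, and by Proposition \ref{prop-hyp1} each $\gr^{h'}$ is itself filtered by $\lexp p j^{\geq h''}_{!*}$ intermediate extensions of type $\bar\pi$, which are \emph{torsion-free} by the main result of \cite{boyer-LT}. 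Now compute $\lexp{p+}j^{1\leq h}_{!*}\LC$: the defining short exact sequence for the $p+$ intermediate extension, together with the compatibility of adjunction morphisms with the filtration $\Fil^\bullet$, shows that its cokernel over $j_!\LC$ admits a filtration whose graded pieces are the $l$-torsion sitting between the $p$ and $p+$ versions of each $\gr^{h'}$; but since every graded piece is a $p$-intermediate extension which is torsion-free and, crucially, whose reduction mod $l$ is irreducible (the reduction of $\pi_v[t]_D$ being irreducible under the supercuspidality hypothesis, by the Remark after Theorem \ref{theo-principal}), the $p$ and $p+$ versions of each such graded piece already agree. Hence the cokernel vanishes, giving $\lexp p j^{1\leq h}_{!*}\LC=\lexp{p+}j^{1\leq h}_{!*}\LC$.

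The main obstacle is the bookkeeping in the middle step: one must check that the $p+$-intermediate extension of a filtered object can itself be read off, filtration piece by filtration piece, from the $p+$-intermediate extensions of the graded pieces — this is not formal, because $\lexp{p+}j_{!*}$ is neither left nor right exact. The clean way around this is to use the glued $t$-structures $\tilde p$ exactly as in the lemma embedded in the proof of Proposition \ref{prop-decompo}: stratum by stratum, one shifts the $t$-structure by $[-1]$ on $X^{\geq h+g}_{\IC,\bar s}$ and beyond, so that the potential torsion discrepancy between $\lexp p j_{!*}$ and $\lexp{p+}j_{!*}$ is localized to the deeper strata, where one then feeds in the torsion-freeness and mod-$l$ irreducibility of the Harris--Taylor $p$-intermediate extensions. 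I expect this $t$-structure gluing argument, carried out inductively on $h$, to occupy the bulk of the proof; the rest is a direct consequence of Propositions \ref{prop-hyp1}, \ref{prop-hyp2} and the structure theorem for $\Psi_\IC$ in \cite{boyer-LT}.
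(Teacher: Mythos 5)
Your argument is circular at its key step. You reduce the lemma to the claim that for each graded piece of the stratification filtration --- i.e.\ for each Harris--Taylor perverse sheaf $\lexp p j^{\geq tg}_{!*}\FC_\Gamma(\pi_v,t)[d-tg]$ --- the $p$ and $p+$ intermediate extensions already agree, justified by the irreducibility of $r_l(\pi_v[t]_D)$. But that claim \emph{is} Theorem \ref{theo-principal}, which in the paper is deduced \emph{from} this lemma, not the other way around; and irreducibility of the mod $l$ reduction of the local system is not, by any available criterion, sufficient to kill the torsion of $\lexp p h^0 i^{tg+1,*} j^{\geq tg}_*\FC_\Gamma(\pi_v,t)[d-tg]$ --- establishing exactly that is the whole point of the paper. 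A related symptom of the confusion: the lemma is stated and proved for \emph{every} $\bar\pi$, with no supercuspidality hypothesis on the reductions of the $\pi_v$'s occurring in the block $\Psi_{\IC,\bar\pi}$ (and it must be, since the non-supercuspidal case of \S\ref{para-cuspi} relies on the same global statements); your insertion of the supercuspidality hypothesis at this stage signals that you are importing the theorem's hypotheses into a statement that does not need them.

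The paper's actual mechanism is quite different and makes no use of reduction mod $l$ of the local systems: starting from the adjunction $\lexp{p+}j^{1\leq h}_!j^{1\leq h,*}\Psi_{\IC,\bar\pi}\to\Psi_{\IC,\bar\pi}$, whose image is $\Fil^h_{\SF,\bar\pi}(\Psi_\IC)$ and whose cokernel $\lexp p h^0 i^{h+1,*}\Psi_{\IC,\bar\pi}$ is free by \cite{boyer-LT}, one gets a surjection $\Fil^h_{\SF,\bar\pi}(\Psi_\IC)\twoheadrightarrow\lexp p j^{1\leq h}_{!*}j^{1\leq h,*}\Psi_{\IC,\bar\pi}$; pushing out the exact sequence $0\to\Fil^h\to\Psi_{\IC,\bar\pi}\to\Psi_{\IC,\bar\pi}/\Fil^h\to 0$ along it produces an object $P$ which is identified (via $j_*j^*$ and $\lexp p h^1 i^!$ considerations, plus a socle argument over $\bar\Qm_l$) with the cofiltration term $\CoFil_{\SF,h,\bar\pi}(\Psi_\IC)$. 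Dualizing the surjection realizes $\lexp{p+}j^{1\leq h}_{!*}j^{1\leq h,*}\Psi_{\IC,\bar\pi}$ as a subobject of the same $\CoFil_{\SF,h,\bar\pi}(\Psi_\IC)$ with torsion-free cokernel supported on $X^{\geq h+1}_{\IC,\bar s}$, alongside $\lexp p j^{1\leq h}_{!*}j^{1\leq h,*}\Psi_{\IC,\bar\pi}$; the absence of nonzero maps from the $p+$ extension to a torsion-free perverse sheaf supported on the closed complement then forces the two subobjects to coincide. If you want to repair your proposal, you need an argument of this duality/pushout type at the level of the whole block $\Psi_{\IC,\bar\pi}$, not an induction on graded pieces whose individual $p=p+$ property is precisely what remains to be proved.
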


\begin{proof}
Rappelons que le morphisme d'adjonction
$$\lexp {p+} j^{1 \leq h}_! j^{1 \leq h,*} \Psi_{\IC,\varrho} \longrightarrow \Psi_{\IC,\varrho}$$
a pour image dans $\lexp p \CC$, le faisceau pervers $\Fil^{h}_{!} (\Psi_{\IC,\varrho})$
et pour conoyau $\lexp p \hi^0 i^{h+1,*} \Psi_{\IC,\varrho}$. D'aprËs \cite{boyer-LT} ce dernier
est libre ce qui nous fournit une surjection
$$\Fil^h_{!} (\Psi_{\IC,\varrho}) \twoheadrightarrow \lexp p j^{1 \leq h}_{!*}  j^{1 \leq h,*} 
\Psi_{\IC,\varrho}$$
puisque le noyau du morphisme d'adjonction prÈcÈdent est ‡ support dans
$X^{\geq h+1}_{\IC,\bar s}$. Soit alors le poussÈ en avant $P$
$$\xymatrix{
0 \ar[r] & \Fil^h_{!} (\Psi_{\IC,\varrho}) \ar[r] \ar@{->>}[d] & \Psi_{\IC,\varrho} \ar[r] 
\ar@{-->>}[d] & 
\Psi_{\IC,\varrho} / \Fil^h_{!} (\Psi_{\IC,\varrho}) \ar[r] \ar@{=}[d] & 0 \\
0 \ar[r] & \lexp p j^{1 \leq h}_{!*}  j^{1 \leq h,*}  \Psi_{\IC,\varrho}  \ar@{-->}[r] & P \ar[r] & 
\Psi_{\IC,\varrho} / \Fil^h_{!} (\Psi_{\IC,\varrho}) \ar[r] & 0.
}$$
Notons que $j^{1 \leq h,*} \Psi_{\IC,\varrho} \simeq j^{1 \leq h,*} P$ et comme
le noyau de $\Psi_{\IC,\varrho} \twoheadrightarrow P$ est ‡ support dans 
$X^{\geq h+1}_{\IC,\bar s}$, alors pour tout $\delta >0$ on a
$$\lexp p \hi^\delta i^{h+1,!} \Psi_{\IC,\varrho} \simeq \lexp p \hi^\delta i^{h+1,!} P.$$
Rappelons, cf. par exemple \cite{boyer-torsion}, que pour tout faisceau pervers $Q$ sans torsion, 
le conoyau du morphisme d'adjonction $Q \longrightarrow j_* j^* Q$ est isomorphe
‡ $\lexp p \hi^1 i^! Q$. On en dÈduit ainsi que
\begin{itemize}
\item $j^{1 \leq h}_* j^{1 \leq h,*} P \simeq j^{1 \leq h}_* j^{1 \leq h,*} \Psi_{\IC,\varrho}$ et 

\item les conoyaux des morphismes d'adjonction
$$P \longrightarrow j^{1 \leq h}_* j^{1 \leq h,*} P \quad \hbox{ et } \quad
\Psi_{\IC,\varrho} \longrightarrow j^{1 \leq h}_* j^{1 \leq h,*} \Psi_{\IC,\varrho}$$
sont isomorphes.
\end{itemize}
On obtient ainsi une surjection dans $\lexp p \CC$
$$P \twoheadrightarrow \CoFil_{!,h}(\Psi_{\IC,\varrho}),$$
o˘ on rappelle que $\CoFil_{!,\bullet}(\Psi_{\IC,\varrho})$ 
est la cofiltration de stratification de $\Psi_{\IC,\varrho}$ de la fin du \S \ref{para-rappel}.
Comme le socle de $P \otimes_{\overline \Zm_l} \overline \Qm_l$ ne contient aucun faisceau pervers
‡ support dans $X^{\geq h+1}_{\IC,\bar s}$,
on en dÈduit que la surjection prÈcÈdente est aussi injective et donc
$$P \simeq \CoFil_{!,h}(\Psi_{\IC,\varrho}).$$
Or dans $\lexp p \CC(X_{\IC,\bar s},\overline \Zm_l)$, la surjection 
$\Fil^h_{!} (\Psi_{\IC,\varrho}) \twoheadrightarrow 
\lexp p  j^{1 \leq h}_{!*}  j^{1 \leq h,*} \Psi_{\IC,\varrho}$, se dualise
dans $\lexp {p+} \CC(X_{\IC,\bar s},\overline \Zm_l)$ en une injection 
$$\lexp {p+} j^{1 \leq h}_{!*}  j^{1 \leq h,*} 
\Psi_{\IC,\varrho}  \hookrightarrow  \CoFil_{!,h}(\Psi_{\IC,\varrho})$$
i.e. en une injection dans $\lexp p \CC(X_{\IC,\bar s},\overline \Zm_l)$ dont le conoyau est sans torsion. 
On se retrouve alors dans la situation suivante:
$$\xymatrix{
0 \ar[r] & \lexp p j^{1 \leq h}_{!*}  j^{1 \leq h,*} \Psi_{\IC,\varrho} \ar[r] &
\CoFil_{!,h}(\Psi_{\IC,\varrho}) \ar[r] & Z \ar[r] & 0 \\
0 \ar[r] & \lexp {p+} j^{1 \leq h}_{!*}  j^{1 \leq h,*} \Psi_{\IC,\varrho} \ar[r] &
\CoFil_{!,h}(\Psi_{\IC,\varrho}) \ar[r] \ar@{=}[u] & Z' \ar[r] & 0 
}$$
o˘ $Z$ et $Z'$ sont des faisceaux pervers sans torsion ‡ support dans 
$X^{\geq h+1}_{\IC,\bar s}$. Comme il n'y a pas de flËche non nulle entre
$\lexp {p+} j^{1 \leq h}_{!*}  j^{1 \leq h,*} \Psi_{\IC,\varrho}$ et un faisceau pervers
sans torsion ‡ support dans $X^{\geq h+1}_{\IC,\bar s}$, on obtient alors une flËche de
$$\lexp {p+} j^{1 \leq h}_{!*}  j^{1 \leq h,*} \Psi_{\IC,\varrho} \longrightarrow 
\lexp {p} j^{1 \leq h}_{!*}  j^{1 \leq h,*} \Psi_{\IC,\varrho}$$
lesquels sont alors isomorphes comme annoncÈ.
\end{proof}

Revenons ‡ prÈsent ‡ la preuve de la proposition \ref{prop-principal}. Pour $\varrho$ fixÈ,
$g:=g(\varrho)$ est l'indice $g'$ minimal tel que $j^{= g',*} \Psi_{\IC,\varrho}$ est non nul. 
Avec la notation \ref{nota-L5}, on a alors
$$j^{\geq g,*} \Psi_{\IC,\varrho} \otimes_{\overline \Zm_l} \overline \Qm_l \simeq 
\bigoplus_{\bar \tau \in \RC_{\overline \Fm_l}(g,\varrho)} \LC_{\overline \Qm_l}(\bar \tau).$$
Les graduÈs de la filtration de stratification exhaustive de $\Fil^g_{!} (\Psi_{\IC,\varrho})$ sont,
d'aprËs les propositions \ref{prop-hyp1} et \ref{prop-hyp2}, et avec les notations de la fin 
du \S \ref{para-HT},
de la forme $\lexp p j^{\geq tg}_{!*} \HT_\Gamma(\pi_v,\Pi_t)(n)$ o˘ 
\begin{itemize}
\item $\pi_v$ est une $\overline \Qm_l$-reprÈsentation irrÈductible cuspidale entiËre de type $\varrho$,

\item $t$ varie de $1$ ‡ $\frac{d}{g}$ 

\item et
\begin{itemize}
\item $\Pi_t$ est une reprÈsentation de $GL_{tg}(F_v)$,

\item $n$ est un entier,

\item $\Gamma$ est un rÈseau stable
\end{itemize}
qu'il est, d'aprËs le lemme \ref{lem-principal}, inutile ici de prÈciser.
\end{itemize}
Pour $1 \leq t \leq \frac{d}{g}$ fixÈ, on peut ainsi Ècrire
$$0 \rightarrow A_{\varrho}(t) \longrightarrow \Fil^g_{!} (\Psi_{\IC,\varrho}) \longrightarrow
B_{\varrho}(t) \rightarrow 0$$
o˘ $A_{\varrho}(t)\otimes_{\overline \Zm_{l}} \overline \Qm_{l}$ rassemble les constituants irrÈductibles de  
$ \Fil^g_{!} (\Psi_{\IC,\bar†\pi} ) \otimes_{\overline \Zm_{l}} \overline \Qm_{l}$ ‡ support dans
$X^{\geq tg+1}_{\IC,\bar s}$. En particulier, avec la notation \ref{nota-HT}, il existe, avec les notations
prÈcÈdentes, $\Pi_t,n$ dÈpendant de $\pi_v$, tels que
$$\bigoplus_{\pi_v \in \scusp(\varrho)} \lexp p j^{=tg}_{!*} \HT(\pi_v,\Pi_t)(n) \hookrightarrow
B_{\varrho}(t) \otimes_{\overline \Zm_{l}} \overline \Qm_{l} $$
et dont le quotient est ‡ support dans $X^{\leq tg-1}_{\IC,\bar s}$. On note $B_{\varrho}(=t)$ (resp.
$\lexp p j^{=tg}_{!*} \HT_\Gamma(\pi_v,\Pi_t)(n)$)
le rÈseau de la somme directe prÈcÈdente dÈcoupÈ par $B_{\varrho}(t)$, i.e. les tirÈs en arriËre
$$\xymatrix{
 \lexp p j^{=tg}_{!*} \HT(\pi_v,\Pi_t)(n)  \ar@{^{(}->}[r] &
\bigoplus_{\pi_v \in \scusp(\varrho)} \lexp p j^{=tg}_{!*} \HT(\pi_v,\Pi_t)(n) \ar@{^{(}->}[r] &
B_{\varrho}(t) \otimes_{\overline \Zm_{l}} \overline \Qm_{l} \\
\lexp p j^{=tg}_{!*} \HT_\Gamma(\pi_v,\Pi_t)(n)  \ar@{^{(}-->}[r]  \ar@{^{(}-->}[u] &
B_{\varrho}(=t) \ar@{^{(}-->}[r] \ar@{^{(}-->}[u] & B_{\varrho}(t). \ar@{^{(}->}[u]
}$$
Les monomorphismes de la ligne du bas sont alors stricts, i.e. les conoyaux sont libres.
On a alors le diagramme commutatif suivant
$$\xymatrix{
0 \ar[r] & A_{\varrho}(t) \ar[r] \ar@{=}[d] & \Fil^g_{!} (\Psi_{\IC,\varrho}) \ar[r] \ar@{^{(}->}[d] & B_{\varrho}(t) 
\ar[r] \ar@{^{(}->}[d] & 0 \\
0 \ar[r] & A_{\varrho}(t) \ar[r]  & \Fil^{tg}_{!} (\Psi_{\IC,\varrho}) \ar[r] \ar@{->>}[d]  & 
\lexp {p} j^{1 \leq tg}_{!*}  j^{1 \leq tg,*} \Psi_{\IC,\varrho} \ar[r] \ar@{->>}[d] & 0 \\
& & F \ar@{=}[r] & Q
}$$
Pour tout $\pi_v \in \scusp(\varrho)$, en composant les inclusions de $\lexp {p+} \CC$
$$\lexp p j^{= tg}_{!*} \HT_\Gamma(\pi_v,\Pi_t)(n) \hookrightarrow B_{\varrho}(t) \hookrightarrow 
\lexp {p} j^{1 \leq tg}_{!*}  j^{1 \leq tg,*} \Psi_{\IC,\varrho}$$
on obtient un monomorphisme strict de $\FC$ 
$$\lexp p j^{= tg}_{!*} \HT_\Gamma(\pi_v,\Pi_t)(n) \hookrightarrow 
\lexp {p} j^{1 \leq tg}_{!*}  j^{1 \leq tg,*} \Psi_{\IC,\varrho},$$
i.e. le conoyau est sans torsion. Or par ailleurs, comme le terme de droite de cette inclusion est 
aussi Ègal ‡ $\lexp {p+} j^{1 \leq tg}_{!*}  j^{1 \leq tg,*} \Psi_{\IC,\varrho}$
on a une monomorphisme strict
\addtocounter{smfthm}{1}
\begin{equation} \label{eq-mono2}
\lexp {p+} j^{=tg}_{!*} \HT_{\Gamma'}(\pi_v,\Pi_t)(n) \hookrightarrow
\lexp {p} j^{1 \leq tg}_{!*}  j^{1 \leq tg,*} \Psi_{\IC,\varrho}
\end{equation}
pour un certain rÈseau stable. En rÈsumÈ, on a
$$\xymatrix{
&\lexp {p} j^{=tg}_{!*} \HT_{\Gamma}(\pi_v,\Pi_t)(n) \ar@{^{(}->}[d]^{strict} \\
\lexp {p+} j^{=tg}_{!*} \HT_{\Gamma'}(\pi_v,\Pi_t)(n) \ar@{^{(}->}[r]_{strict} \ar@{^{(}-->}[ur] \ar[dr]_{nul} &
\lexp {p} j^{1 \leq tg}_{!*}  j^{1 \leq tg,*} \Psi_{\IC,\varrho} \ar@{->>}[d] \\ & Q
}$$
Par ailleurs sur $\overline \Qm_l$, d'aprËs \cite{boyer-invent2}, la multiplicitÈ de
$\lexp p j^{=tg} \HT (\pi_v,\Pi_t)(n)$ dans $\lexp {p} j^{1 \leq tg}_{!*}  j^{1 \leq tg,*} \Psi_{\IC,\varrho}$
est Ègale ‡ $1$ et donc, est nulle dans $Q$ de sorte que, cf. la proposition 1.1.8 de \cite{quasi-ab}, 
le monomorphisme strict (\ref{eq-mono2})
se factorise en un monomorphisme strict
$$\lexp {p+} j^{=tg}_{!*} \HT_{\Gamma'}(\pi_v,\Pi_t)(n) \hookrightarrow 
\lexp {p} j^{=tg}_{!*} \HT_{\Gamma}(\pi_v,\Pi_t)(n).$$
En appliquant $j^{=tg,*}$, on obtient en particulier $\Gamma'=\Gamma$ puis comme le composÈ
$$\lexp {p+} j^{=tg}_{!*} \HT_{\Gamma}(\pi_v,\Pi_t)(n) \hookrightarrow 
\lexp {p} j^{=tg}_{!*} \HT_{\Gamma}(\pi_v,\Pi_t)(n) \hookrightarrow 
\lexp {p+} j^{=tg}_{!*} \HT_{\Gamma}(\pi_v,\Pi_t)(n)$$
est un isomorphisme, on obtient bien 
$\lexp p j^{= tg}_{!*} HT_\Gamma(\pi_v,\Pi_t)(n) \simeq \lexp {p+} 
j^{= tg}_{!*} HT_\Gamma(\pi_v,\Pi_t)(n)$.

\section{Extensions intermÈdiaires: cas non supercuspidal}
\label{para-cuspi}

Soit $\varrho$ une $\overline \Fm_l$-reprÈsentation irrÈductible supercuspidale de $GL_g(F_v)$
et $\bar \tau_{\varrho,t}$ la reprÈsentation de $D_{v,tg}^\times$
de type $\varrho$ associÈe ‡ la $\overline \Fm_l$-reprÈsentation superSpeh $\speh_t(\varrho)$,
cf. le thÈorËme \ref{theo-jl}.

Pour tout $\overline \Qm_l$-reprÈsentation irrÈductible entiËre $\tau_v$
de $\bar \tau_{\varrho,t}$-type $-1$, d'aprËs le thÈorËme \ref{theo-principal}, pour tout rÈseau stable
$\Gamma$ de $\tau_v$, le systËme local
$\LC_{\Gamma}(\tau_v)$ n'admet qu'une seule extension intermÈdiaire, i.e.
$$\lexp p j^{=tg}_{!*} \LC_{\Gamma}(\tau_v)[d-tg] \simeq \lexp {p+} j^{=tg}_{!*}
\LC_{\Gamma}(\tau_v)[d-tg].$$
En utilisant le triangle distinguÈ, cf. \cite{juteau} 2.42-2.46,
\begin{multline*}
\lexp p j^{=tg}_{!*} \LC_\Gamma(\tau_v)[d-tg] \rightarrow 
\lexp {p+} j^{=tg}_{!*} \LC_\Gamma(\tau_v)[d-tg] \rightarrow \\
\lexp p i^{tg+1}_* \hi^0_{tors} i^{tg+1,*} j^{\geq tg}_* \LC_\Gamma(\tau_v)[d-tg] \leadsto
\end{multline*}
la torsion de $\lexp p i^{tg+1}_* \hi^0 i^{tg+1,*}j^{\geq tg}_* \LC_\Gamma(\tau_v)[d-tg]$ 
est nulle. Le but de ce paragraphe est d'expliciter la $l$-torsion de ce dernier dans
le cas o˘ $\tau_v$ est de $\bar \tau_{\varrho,t}$-type $i \geq 0$.

\subsection{RÈseaux d'induction d'aprËs \texorpdfstring{\cite{boyer-repmodl}}{Lg}}
\label{para-RI}

Pour $\pi$ une reprÈsentation irrÈductible cuspidale entiËre de $GL_g(K)$,
comme, d'aprËs \ref{prop-red-modl},
sa rÈduction modulo $l$, notÈe $\varrho$, est irrÈductible, on en dÈduit
qu'‡ isomorphismes prËs, $\pi$
possËde un unique rÈseau stable, cf. par exemple \cite{bellaiche-ribet} proposition 3.3.2 et la remarque qui suit.

\begin{defi} \phantomsection \label{defi-RI} (cf. \cite{boyer-repmodl})
…tant donnÈ un rÈseau de $\st_t(\pi)$, la surjection (resp. l'injection)
$$\st_t(\pi) \overrightarrow{\times} \pi  \twoheadrightarrow \st_{t+1}(\pi), 
i$$
induit un rÈseau de $\st_{t+1}(\pi)$ de sorte que par rÈcurrence on dispose d'un rÈseau  
$RI_{\overline \Zm_l,-}(\pi,t)$ que l'on qualifie de \textit{rÈseau d'induction}. On note alors
$$RI_{\overline \Fm_l,-}(\pi,t):= RI_{\overline \Zm_l,-}(\pi,t) \otimes_{\overline \Zm_l} \overline \Fm_l, 
$$
\end{defi}

\begin{prop} \phantomsection \label{prop-defi-Vk} 
(cf. \cite{boyer-repmodl} propositions 3.2.2 et 3.2.7)
Pour tout $0 \leq k \leq \lg_\varrho(s)$, il existe une sous-reprÈsentation $V_{\varrho,-}(s;k)$ 
de longueur $k$ de $RI_{\overline \Fm_l,-}(\pi,s)$
$$(0)=V_{\varrho,\pm}(s;0) \varsubsetneq V_{\varrho,\pm}(s;1) \varsubsetneq \cdots \varsubsetneq
V_{\varrho,\pm}(s;\lg_\varrho(s))= RI_{\overline \Fm_l,-}(\pi,s),$$
dÈfinie de sorte que l'image de $V_{\varrho,-}(s;k)$ dans le groupe de Grothendieck est telle
que tous ses constituants irrÈductibles sont
de $\varrho$-niveau strictement plus grand que n'importe quel constituant irrÈductible de
$W_{\varrho,-}(s;k):= V_{\varrho,-}(s;\lg_\varrho(s))/V_{\varrho,-}(s;k)$.
\end{prop}

\begin{nota} \phantomsection \label{nota-V}
Une reprÈsentation irrÈductible $\varrho$ Ètant fixÈe ainsi qu'un entier $s$,
pour $k\geq 0$ tel que $m(\varrho)l^k \leq s$, on note:
\begin{itemize}
\item $\underline{\delta_k}=(0,\cdots,0,1,0,\cdots) \in \IC_\varrho(s)$ et

\item pour tout $t$ tel que $m(\varrho)l^kt \leq s$,
$V_{\varrho,-}(s,\geq t.\underline{\delta_k})$ le sous-espace $V_{\varrho,-}(s,\lg_\varrho(s))$ 
dÈfini ci-dessus tel que
tous les constituants irrÈductibles de $V_{\varrho,-}(s,\lg_\varrho(s))$ sont de $\varrho$-niveau plus
grand ou Ègal ‡ $t.\underline{\delta_k}$.
\end{itemize}
\end{nota}

\subsection{RÈduction modulo \texorpdfstring{$l$}{Lg} d'un faisceau pervers d'Harris-Taylor}
\label{para-nota}

Pour Ètudier la $l$-torsion de 
$\lexp p i^{tg+1}_* \hi^0 i^{tg+1,*}j^{\geq tg}_* \LC_{\Gamma}(\tau_v)[d-tg]$, nous utiliserons
le foncteur de rÈduction modulaire
$$\Fm(-):=\overline \Fm_l \otimes_{\overline \Zm_l}^\Lm (-).$$ 
Rappelons que 
ce dernier ne commute pas aux foncteurs de troncations et que d'aprËs les Èquations 
2.54-2.61 de \cite{juteau}, on a
$$\Fm \lexp p j_{!*} \rightarrow \lexp p j_{!*} \Fm \rightarrow \hi^{-1} \Fm \lexp p i_*
\lexp p \hi^0_{tors} i^* j_*[1] \leadsto$$
$$\lexp p j_{!*} \Fm \rightarrow \Fm \lexp {p+} j_{!*} \rightarrow \hi^0 \Fm \lexp p i_*
\lexp p \hi^0_{tors} i^*j_* \leadsto$$
En revanche, dans le cas o˘ $\lexp p j_!=\lexp {p+} j_!$, i.e. en utilisant 
$$\lexp p j_! \rightarrow \lexp {p+} j_! \rightarrow \lexp p i_* \hi^{-1}_{tors} i^* j_* [1] \leadsto$$
quand $ \lexp p i_* \hi^{-1} i^* j_*$ est libre, alors le triangle distinguÈ
$$\Fm  \lexp p j_! \rightarrow \lexp p j_! \Fm \rightarrow \hi^{-1} \Fm \lexp p i_* \lexp p
\hi^{-1}_{tors}i^*j_* [2] \leadsto$$
nous donne que $\Fm$ et $\lexp p j_!$ commutent. Nous allons utiliser cette propriÈtÈ
avec les $j^{\geq tg}$. Rappelons les notations du \S \ref{para-GL-l}:
\begin{itemize}
\item $\varrho:=\varrho_{-1}$ dÈsignera une $\overline \Fm_l$-reprÈsentation irrÈductible 
supercuspidale de $GL_g(F_v)$ et on notera aussi $g_{-1}:=g$.

\item Pour tout $u \geq 0$, selon les notations de \ref{nota-rhoi}, on note 
$\varrho_u=\st_{m(\varrho_{-1})l^u}(\varrho_{-1})$ 
la reprÈsentation irrÈductible cuspidale de $GL_{g_u}(F_v)$ avec $g_u=g m(\varrho_{-1})l^u$.

\item Pour tout $u \geq -1$, on se fixe une reprÈsentation irrÈductible cuspidale $\pi_u$
de $GL_{g_u}(F_v)$ dont la rÈduction modulo $l$ est isomorphe ‡ $\varrho_u$ et 

\item enfin on notera $s_u:=\lfloor \frac{d}{g_u} \rfloor$.
\end{itemize}

\begin{nota}
Pour tout $t \geq 1$, on note 
$$\bar \tau_{\varrho_u,t}:=r_l \bigl ( \pi_u[t]_D \bigr ).$$
\end{nota}

\rem on rappelle, cf. le \S \ref{para-repD}, que $\bar \tau_{\varrho_{-1},t}$ est irrÈductible.

\begin{prop} (cf. la proposition \ref{prop-repD}) \\
Pour $u \geq 0$, on a l'ÈgalitÈ suivante dans le groupe de Grothendieck
\addtocounter{smfthm}{1}
\begin{equation} \label{eq-redmodl}
\bar \tau_{\varrho_u,t}=l^u \sum_{i=0}^{m(\varrho)-1} \bar \tau_{\varrho,tm(\varrho)l^u} \nu^i.
\end{equation}
\end{prop}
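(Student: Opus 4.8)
Cet \'enonc\'e, de nature purement repr\'esentation-th\'eorique, co\"incide avec la proposition \ref{prop-repD} de l'appendice, dont nous esquissons la d\'emonstration. L'id\'ee serait de tout transporter sur $GL_n(F_v)$ via la correspondance de Jacquet-Langlands modulo $l$: d'apr\`es le th\'eor\`eme \ref{theo-jl} et la th\'eorie des repr\'esentations $l$-modulaires de $GL_n$ et de ses formes int\'erieures, $\JL$ induit une bijection sur les classes de repr\'esentations $\bar \Qm_l$-irr\'eductibles essentiellement de carr\'e int\'egrable et, au niveau des groupes de Grothendieck, un morphisme injectif $R_{\bar \Fm_l}(D_{v,n}^\times) \longrightarrow R_{\bar \Fm_l}(GL_n(F_v))$ compatible \`a la r\'eduction modulo $l$. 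Puisque $\pi_u[t]_D=\JL(\st_t(\pi_u))$, l'application de $r_l$ donnerait, dans le groupe de Grothendieck,
$$\JL(\bar \tau_{u,t})=r_l\bigl ( \st_t(\pi_u) \bigr ),$$
et l'injectivit\'e de $\JL$ ram\`enerait la preuve \`a la description du membre de droite puis \`a la reconnaissance de son ant\'ec\'edent par $\JL$.

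On commencerait par le cas $u=-1$, o\`u $g_{-1}=g$ et $r_l(\pi_{-1})=\varrho_{-1}$ est supercuspidale. D'apr\`es la th\'eorie de la r\'eduction modulo $l$ des repr\'esentations de $D_{v,n}^\times$ (\cite{boyer-repmodl}), la r\'eduction modulo $l$ d'une repr\'esentation $\bar \Qm_l$-irr\'eductible de $D_{v,n}^\times$ dont le transfert de Jacquet-Langlands est un Steinberg g\'en\'eralis\'e au-dessus d'une repr\'esentation cuspidale de r\'eduction \emph{supercuspidale} reste irr\'eductible; comme $\pi_{-1}[t]_D=\JL(\st_t(\pi_{-1}))$ et que $r_l(\pi_{-1})=\varrho_{-1}$ est supercuspidale, on en d\'eduirait que $\bar \tau_{-1,t}$ est irr\'eductible.

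Pour le cas $u\geq 0$, on a cette fois $r_l(\pi_u)=\varrho_u=\st_{ml^u}(\varrho_{-1})$ avec $m=m(\varrho_{-1})$, dont le support supercuspidal est le segment $\{\varrho_{-1},\varrho_{-1}\nu,\cdots,\varrho_{-1}\nu^{ml^u-1}\}$. La repr\'esentation $\st_t(\pi_u)$ se r\'eduirait alors modulo $l$, dans le groupe de Grothendieck, en une combinaison de classes associ\'ees \`a des segments de Steinberg g\'en\'eralis\'es de longueur $tml^u$ au-dessus de $\varrho_{-1}$; la r\'eduction modulo $l$ d'un tel segment, de longueur divisible par $ml^u$, est pr\'ecis\'ement calcul\'ee dans \cite{boyer-repmodl} et fait appara\^itre le facteur de multiplicit\'e $l^u$ ainsi que la somme sur les $m$ torsions non ramifi\'ees $\nu^i$, $0\leq i\leq m-1$, parcourant l'orbite de $\varrho_{-1}$ sous $\nu$. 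En transf\'erant par $\JL$, chaque segment de longueur $tml^u$ au-dessus de $\varrho_{-1}$ redonnerait $\pi_{-1}[tml^u]_D=\JL(\st_{tml^u}(\pi_{-1}))$, tordu par $\nu^i$, d'o\`u l'\'egalit\'e
$$\bar \tau_{u,t}=l^u\sum_{i=0}^{m-1}\bar \tau_{-1,tml^u}\nu^i$$
annonc\'ee.

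Le point principal, et l'essentiel de la difficult\'e, serait le calcul de la r\'eduction modulo $l$ de la repr\'esentation essentiellement de carr\'e int\'egrable $\st_t(\pi_u)$, c'est-\`a-dire celle d'un segment de Steinberg dont la longueur est multiple de $m(\varrho_{-1})$ et de $l^u$: c'est le c{\oe}ur combinatoire de \cite{boyer-repmodl}, le travail d\'elicat \'etant le suivi de la classification de Zelevinsky-Langlands sur $\bar \Fm_l$ et de sa compatibilit\'e avec $\JL$ au niveau des groupes de Grothendieck. L'irr\'eductibilit\'e dans le cas $u=-1$ en serait le second point sensible, assur\'ee par le caract\`ere supercuspidal de $\varrho_{-1}$, qui interdit tout ph\'enom\`ene d'enroulement.
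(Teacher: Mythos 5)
Votre r\'eduction au c\^ot\'e $GL$ via une correspondance de Jacquet--Langlands modulo $l$ bute sur un point pr\'ecis : le morphisme que vous invoquez, injectif du groupe de Grothendieck des $\bar \Fm_l$-repr\'esentations de $D_{v,n}^\times$ vers celui de $GL_n(F_v)$ et \og compatible \`a la r\'eduction modulo $l$ \fg{} au sens o\`u $\JL(\bar \tau_{u,t})=r_l(\st_t(\pi_u))$, n'existe pas. Prenez $t=1$ (ou tout $t<m(\varrho_u)$) : $\st_1(\pi_u)=\pi_u$ est cuspidale et sa r\'eduction $\varrho_u$ est irr\'eductible (proposition \ref{prop-red-modl}), tandis que $\bar \tau_{u,1}=r_l(\pi_u[1]_D)$ est de longueur $m(\varrho_{-1})l^u \geq 2$ --- c'est exactement ce que la formule (\ref{eq-redmodl}) quantifie. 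La r\'eduction modulo $l$ ne commute donc pas \`a $\JL$, et aucune manipulation d'injectivit\'e dans le groupe de Grothendieck de $GL_n$ ne peut faire appara\^itre le facteur $l^u$ : pour $t<m(\varrho_u)$ le membre $GL$ est irr\'eductible et ne \og voit \fg{} pas cette multiplicit\'e. Le transfert disponible va d'ailleurs dans l'autre sens (de $GL_n$ vers $D^\times_{v,n}$, \`a la Badulescu--Dat), il est loin d'\^etre injectif, et sa compatibilit\'e modulo $l$ est pr\'ecis\'ement le contenu d\'elicat de \cite{dat-jl} : on ne peut pas l'invoquer en bo\^ite noire pour red\'emontrer l'\'enonc\'e, et l'\'etape o\`u vous affirmez que la r\'eduction du segment \og fait appara\^itre le facteur $l^u$ et la somme des $m$ torsions \fg{} n'est pas d\'eriv\'ee mais postul\'ee.

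La d\'emonstration vis\'ee par le papier se fait directement du c\^ot\'e de $D_{v,h}^\times$ : c'est la proposition \ref{prop-repD} (\cite{dat-jl} 2.3.2). On \'ecrit $\tau=\pi_u[t]_D$ comme induite d'un type, $\tau \simeq \ind_J(\tilde \zeta_{|J}\otimes \chi)$ ; comme $r_l(\tilde \zeta_{|J})$ est irr\'eductible, $r_l(\tau)$ est de longueur $[N_\zeta \cap N_{r_l(\chi)}:N_\zeta \cap N_\chi]$ et ses constituants sont les tordues $\bar \tau \nu^i$ d'une m\^eme irr\'eductible $\bar \tau$ ; le d\'ecompte des tordues distinctes donne $m(\varrho_{-1})$ et la chute du normalisateur fournit la multiplicit\'e $l^u$. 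L'identification $\bar \tau \simeq \bar \tau_{-1,tm(\varrho_{-1})l^u}$ ainsi que l'irr\'eductibilit\'e de $\bar \tau_{-1,t}$ proviennent du th\'eor\`eme \ref{theo-jl} (la r\'eduction de $\pi_{-1}[s]_D$ est irr\'eductible et correspond \`a la superSpeh associ\'ee). Votre calcul c\^ot\'e $GL$ de $r_l(\st_t(\pi_u))$ (th\'eor\`eme \ref{theo-ss-quotient}) est correct mais il intervient ailleurs (proposition \ref{prop-dec-pervers}) ; il ne peut pas remplacer l'argument c\^ot\'e $D^\times$ sans une compatibilit\'e $\JL$/$r_l$ qui, telle que vous l'\'enoncez, est fausse.
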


\rem dans le cas o˘, avec les notations de \ref{defi-mrho}, $\epsilon(\varrho)=1$, la formule
(\ref{eq-redmodl}) s'Ècrit $\bar \tau_{\varrho_u,t}=l^{u+1} \bar \tau_{\varrho,t}$.
%
%

\begin{prop} \phantomsection \label{prop-dec-pervers}
Avec les notations de la proposition \ref{prop-defi-Vk}, dans le groupe de Grothendieck des
$\overline \Fm_l$-faisceaux pervers Èquivariants sur $X_{\IC,\bar s}$, on a l'ÈgalitÈ
\begin{multline*}
\Fm \Bigl ( \lexp p j_{!*}^{\geq tg_{u}} \HT(\pi_{u},\Pi_t) \Bigr )=  m(\varrho)l^{u} 
\sum_{r=0}^{s-tm(\varrho)l^u} \lexp p j_{!*}^{\geq tg_{u}+rg_{-1}} \\
\HT \bigl (\varrho,r_l(\Pi_{t}) \overrightarrow{\times} V_{\varrho_{-1}}(r+tm(\varrho_{-1})l^u ,
< \underline{\delta_u}) \bigr ) \otimes \Xi^{r\frac{g-1}{2}}.
\end{multline*}
\end{prop}

\rem dans le groupe de Grothendieck, l'induite $r_l(\Pi_t)  \overrightarrow{\times} 
V_{\varrho_{-1}}(r+tm(\varrho_{-1})l^u ,< \underline{\delta_u})$ n'intervient que par 
sa semi-simplifiÈe. Pour les mÍmes raisons, il est inutile de prÈciser les rÈseaux stables
utilisÈs pour les systËmes locaux de la formule prÈcÈdente.

\begin{proof}
Les cas $tg_u \geq d$ Ètant triviaux, on raisonne par rÈcurrence en supposant le rÈsultat
acquis pour tout $t<t' $ et on traite le cas de $t$. L'idÈe est de partir de la commutation
entre $\Fm$ et les $j^{\geq tg}_!$:
$$\Fm \bigl ( j_!^{\geq tg_u} \HT(\pi_u,\Pi_{t}) \bigr )= 
\sum_{t'=t}^{s_u} \Fm \bigl ( \lexp p j_{!*}^{\geq t'g_u} \HT(\pi_u,\Pi_t \overrightarrow{\times}
\st_{t'-t}(\pi_u) ) \bigr ) \otimes \Xi^{\frac{(t'-t)(g_u-1)}{2}}),$$
et, en posant $t(u)=tm(\varrho)l^u$
\addtocounter{smfthm}{1}
\begin{multline} \label{eq-dec-1}
\Fm \bigl ( j_!^{\geq t(u)g_{-1}} \HT(\pi_{-1},\Pi_{t}) \bigr )= \\
\sum_{t'=t(u)}^s \Fm \bigl ( \lexp p j_{!*}^{\geq t'g_{-1}} \HT(\pi_{-1}, \Pi_{t}) \overrightarrow{\times} 
\st_{t'-t(u)}(\pi_{-1}) \bigr ) \otimes \Xi^{\frac{(t'-t(u))(g_{-1}-1)}{2}}.
\end{multline}
Or on a vu que
$$\Fm j_!^{\geq tg_u} \HT(\pi_u,\Pi_t) =j_!^{\geq tg_u} \Fm \HT(\pi_u,\Pi_t) =
m(\varrho) l^u j_!^{\geq t(u)g_{-1}} \Fm \HT(\pi_{-1},\Pi_t)$$
et d'aprËs l'hypothËse de rÈcurrence, on a
\addtocounter{smfthm}{1}
\begin{multline} \label{eq-dec-2}
\sum_{t'=t+1}^{s_u}
\Fm \Bigl ( \lexp p j_{!*}^{\geq t'g_u} \HT(\pi_u,\Pi_t \overrightarrow{\times} \st_{t'-t}(\pi_u)) \Bigr ) \otimes
\Xi^{\frac{(t'-t)(g_u-1)}{2}} \\ 
= \sum_{t'=t(u)+1}^{s(u)} \lexp p j_{!*}^{\geq t'g_{-1}}
\HT(\varrho,r_l(\Pi_{t}) \overrightarrow{\times} V_{\varrho_{-1}}(t'-t(u),\geq \underline{\delta_u}) ) 
\otimes \Xi^{\frac{(t'-t(u))(g_1-1)}{2}}
\end{multline}
En soustrayant (\ref{eq-dec-2}) ‡ (\ref{eq-dec-1}), on obtient le rÈsultat.
\end{proof}

\rem la surjection
$$\lexp p j_{!}^{\geq t'g_u} \Fm \HT(\pi_u,\Pi_t{'})  \twoheadrightarrow
\Fm \lexp p j_{!*}^{\geq t'g_u} \HT(\pi_u,\Pi_{t'})$$
nous donne en outre que la suite des dimensions des
graduÈs de la filtration de stratification exhaustive est strictement croissante ce qui fixe
complËtement cette filtration.

Ainsi la $l$-torsion du quotient des $p+$ faisceaux pervers d'Harris-Taylor par
leur $p$ version, est complËtement dÈcrit par la combinatoire
de la rÈduction modulo $l$ des reprÈsentations de $GL_d(F_v)$ et de $D_{v,d}^\times$.
L'Ètude de la torsion d'ordre supÈrieure dÈcoulerait selon le mÍme schÈma de dÈmonstration,
de l'Ètude de la rÈduction modulo $l^n$ des reprÈsentations irrÈductibles
de $GL_d(F_v)$ et $D_{v,d}^\times$.

\subsection{Complexes d'induction des reprÈsentations de Steinberg}
\label{para-complexe}

La filtration de stratification du faisceau pervers 
$\Fm \bigl ( \lexp p j_{!*}^{\geq tg_{u}} \HT(\pi_{u},\Pi_{t}) \bigr )$ 
dont les graduÈs sont d'aprËs la remarque
de la fin du paragraphe prÈcÈdent, les faisceaux pervers de la proposition 
\ref{prop-dec-pervers} pris dans l'ordre inverse de la dimension de leur support 
(le sous-espace est donnÈ par le faisceau pervers dont le support est de dimension minimale),
fournit une suite spectrale qui calcule ses faisceaux de cohomologie.
Le but de ce paragraphe est de dÈcrire la combinatoire de cette suite spectrale.

\begin{nota}
Pour tout $s \geq 1$,
on note $K_{\pi}(s)^\bullet$ le complexe
$$K_\pi(s)^i=\left \{ \begin{array}{ll} 0 & \hbox{si } i \geq 0 \hbox{ ou } i<-s \\ RI_{\overline \Zm_l,-}(\pi,s+i)
\overrightarrow{\times} \speh_{-i}(\pi) & \hbox{pour } -s \leq i \leq 0 \end{array} \right.$$
\end{nota}

\rem d'aprËs la dÈfinition rappelÈe au \S \ref{para-RI} des rÈseaux d'induction 
$RI_{\overline \Zm_l,-}(\pi,s+i)$, la cohomologie de $K_{\pi}(s)^\bullet$ est nulle. 

En notant $\varrho$ la rÈduction modulo $l$ de $\pi$, on dÈfinit aussi
$$K_\varrho(s)^\bullet:=K_\pi(s)^\bullet \otimes_{\overline \Zm_l} \overline \Fm_l$$
dont la cohomologie est nulle.

\begin{defi} Pour tout $k,t \geq 0$, tels que $m(\varrho)l^kt \leq s$, avec les notations de \ref{nota-V}, on dÈfinit
\begin{itemize}
\item $K_\varrho(s,\geq t.\underline{\delta_k})^\bullet$ le sous-complexe de
$K_\varrho(s)$ dÈfini,  pour $-s \leq i \leq 0$ par
$$K_\varrho(s,\geq t.\underline{\delta_k})^i=V_\varrho(s+i,\geq t.\underline{\delta_k}) 
\overrightarrow{\times} \speh_{-i}(\varrho);$$

\item $K_\varrho(s,< t.\underline{\delta_k})^\bullet$ le quotient de $K_\varrho(s)^\bullet$ par 
$K_\varrho(s,\geq t.\underline{\delta_k})^\bullet$.
\end{itemize}
\end{defi}

\begin{prop} \phantomsection \label{prop-coho-complexe}
La cohomologie $\hi^i K_\varrho(s,<\underline{\delta_k})$ du complexe $
K_\varrho(s,<t.\underline{\delta_k})^\bullet$ est:
\begin{itemize}
\item nulle si $m(\varrho)l^k$ ne divise pas $s$;

\item pour $s=\delta m(\varrho)l^k$, elle est nulle si $i \neq -\delta$ et pour $i=-\delta$ isomorphe ‡
$\speh_{\delta}(\rho_k)$.
\end{itemize}
\end{prop}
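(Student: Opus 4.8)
\medskip
\noindent\textbf{Proof strategy.} The plan is to run an induction (on $\delta$, equivalently on the dimension of the supports entering the spectral sequence of \S\ref{para-complexe}), whose engine is the tautological short exact sequence of complexes
$$0 \to K_\varrho(s,\geq t.\underline{\delta_k})^\bullet \to K_\varrho(s)^\bullet \to K_\varrho(s,< t.\underline{\delta_k})^\bullet \to 0$$
together with the acyclicity of $K_\varrho(s)^\bullet$ recorded in the remark following its definition, which itself follows from~(\ref{eq-redmodl}). The associated long exact sequence degenerates into isomorphisms $h^i K_\varrho(s,< t.\underline{\delta_k})^\bullet \simeq h^{i+1} K_\varrho(s,\geq t.\underline{\delta_k})^\bullet$ for every $i$, so the whole computation reduces, up to a shift by one in degree, to that of the cohomology of the \emph{sub}-complex $K_\varrho(s,\geq t.\underline{\delta_k})^\bullet$. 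Since the $K_\varrho(s,\geq t'.\underline{\delta_k})^\bullet$ form a decreasing filtration of $K_\varrho(s)^\bullet$, it is enough to treat $t=1$, the general case following once one checks that the graded pieces $K_\varrho(s,\geq t'.\underline{\delta_k})^\bullet/K_\varrho(s,\geq (t'+1).\underline{\delta_k})^\bullet$ are acyclic for $1\le t'<\delta$ — a degenerate instance of the same computation, which is also why the answer does not depend on $t$.

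\medskip
The core step is to identify $K_\varrho(s,\geq\underline{\delta_k})^\bullet$ by \emph{minimal segment extraction}. Its degree $i$ term is $V_\varrho(s,\geq\underline{\delta_k})\overrightarrow{\times}[\overrightarrow{-i-1}]_\pi$, and the point is that left multiplication by the minimal segment $\underline{\delta_k}$ attached to $\varrho_k=\st_{m(\varrho_{-1})l^k}(\varrho_{-1})$ is injective in Zelevinsky's ordered product, with image exactly $V_\varrho(s,\geq\underline{\delta_k})$; this is precisely what the reduction combinatorics of~(\ref{eq-redmodl}), proposition~\ref{prop-repD} and notation~\ref{nota-V} encode, identifying $V_\varrho(s,\geq\underline{\delta_k})$ with the corresponding multiplicity space for the cuspidal $\bar\Fm_l$-representation $\varrho_k$ (equivalently $\rho_k$). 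One must then check that this extraction is compatible with the right-hand factor $[\overrightarrow{-i-1}]_\pi$ and, above all, with the differentials inherited from $K_\pi(s)^\bullet$; granting that, $K_\varrho(s,\geq\underline{\delta_k})^\bullet$ becomes, up to a shift of indices and a twist by a power of $\Xi$, the induction complex $K_{\rho_k}(\delta)^\bullet$ built out of $\rho_k$, \emph{provided} $m(\varrho)l^k$ divides $s$; write $s=\delta\,m(\varrho)l^k$. If $m(\varrho)l^k\nmid s$, no minimal $\varrho_k$-segment can be split off so as to exhaust the extreme term, the identification degenerates to an acyclic complex, and the first paragraph gives $h^\bullet K_\varrho(s,<t.\underline{\delta_k})^\bullet=0$, the first bullet of the statement.

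\medskip
In the divisible case, $s=\delta\,m(\varrho)l^k$, it remains to compute $h^\bullet K_{\rho_k}(\delta)^\bullet$, and here $\rho_k$ being cuspidal but \emph{not} supercuspidal is decisive: unlike $K_\varrho(s)^\bullet$, this complex need not be acyclic, because the corrections built into $RI_{\bar\Fm_l}(\rho_k,-)$ no longer force exactness at the extreme term. One shows, again by the same short exact sequence mechanism applied after unfolding $\rho_k$ into $\varrho_{-1}$-data (this is the inductive step), that $K_{\rho_k}(\delta)^\bullet$ is exact in every degree $i>-\delta$, while its extreme term $K_{\rho_k}(\delta)^{-\delta}=RI_{\bar\Fm_l}(\rho_k,0)\overrightarrow{\times}[\overrightarrow{\delta-1}]_{\rho_k}=[\overrightarrow{\delta-1}]_{\rho_k}$ survives entirely, the differential out of it being zero. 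Hence $h^\bullet K_{\rho_k}(\delta)^\bullet$ is concentrated in degree $-\delta$ with value $[\overrightarrow{\delta-1}]_{\rho_k}$, and transporting this back through the shift of the second paragraph and the isomorphism of the first yields the second bullet.

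\medskip
I expect the main obstacle to be the chain-level identification of the second paragraph: the reduction formula~(\ref{eq-redmodl}) controls the extraction of the minimal segment $\underline{\delta_k}$ only in the Grothendieck group, whereas here it is needed as an isomorphism of complexes, so one must verify that $\underline{\delta_k}\,\overrightarrow{\times}(-)$ intertwines the Jacquet/restriction maps defining the differentials of $K_\pi(s)^\bullet$ and $K_{\rho_k}(\delta)^\bullet$, and keep precise track of the degree shift and $\Xi$-twist so as to land in cohomological degree $-\delta$. Once this compatibility and the acyclicity used in the third paragraph are in place, the divisibility dichotomy and the identification of $RI_{\bar\Fm_l}(\rho_k,0)$ with the trivial representation are routine bookkeeping.
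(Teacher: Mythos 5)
Your opening move (the short exact sequence $0 \to K_\varrho(s,\geq t.\underline{\delta_k})^\bullet \to K_\varrho(s)^\bullet \to K_\varrho(s,< t.\underline{\delta_k})^\bullet \to 0$ combined with the acyclicity of $K_\varrho(s)^\bullet$) is fine, but the two pillars you put on top of it both give way. The central one — identifying $K_\varrho(s,\geq \underline{\delta_k})^\bullet$, up to shift and twist, with the induction complex $K_{\rho_k}(\delta)^\bullet$ — cannot hold at the level of complexes: the terms of $K_\varrho(s,\geq\underline{\delta_k})^\bullet$ sit inside $RI_{\bar \Fm_l}(\pi,s+i)\overrightarrow{\times}[\overrightarrow{-i-1}]_\pi$ with right-hand factors attached to $\pi$ (hence to $\varrho$), and there are roughly $(\delta-1)m(\varrho)l^k+1$ nonzero terms, whereas $K_{\rho_k}(\delta)^\bullet$ has $\delta+1$ terms built out of $\rho_k$; no shift of indices matches these graded objects, so at best one could hope for a quasi-isomorphism — and proving that is essentially the proposition itself, not a lemma one may grant. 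Worse, your computation of $h^\bullet K_{\rho_k}(\delta)^\bullet$ contradicts the remark following the definition of $K_\pi(s)^\bullet$: choosing any entire cuspidal lift $\pi_k$ of $\rho_k$ (it exists, and its reduction is automatically irreducible cuspidal — supercuspidality plays no role there), one has $K_{\rho_k}(\delta)^\bullet = K_{\pi_k}(\delta)^\bullet\otimes_{\bar \Zm_l}\bar\Fm_l$, which is acyclic; so its cohomology is not $[\overrightarrow{\delta-1}]_{\rho_k}$ in degree $-\delta$. Your scheme is therefore internally inconsistent: if the claimed identification were correct, acyclicity of $K_{\rho_k}(\delta)^\bullet$ would force $K_\varrho(s,<\underline{\delta_k})^\bullet$ to be acyclic even when $m(\varrho)l^k$ divides $s$, contradicting the very statement you are proving.

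The auxiliary reduction to $t=1$ is also unjustified: the graded pieces $K_\varrho(s,\geq t'.\underline{\delta_k})^\bullet/K_\varrho(s,\geq(t'+1).\underline{\delta_k})^\bullet$ are of the form $\st_{t'}(\rho_k)\overrightarrow{\times}K_\varrho(s-t'm(\varrho)l^k,<\underline{\delta_k})^\bullet$, and by the proposition itself (applied inductively) these are not acyclic precisely in the divisible case $m(\varrho)l^k \mid s$, so the cohomology does depend on $t$; the statement is really about $t=1$, as the left-hand side of the display indicates. The paper's proof has a different architecture: it argues by induction on $s$, filters the acyclic complex $K_\varrho(s)^\bullet$ by all the subcomplexes $K_\varrho(s,>t.\underline{\delta_k})^\bullet$ (filtration (\ref{filtration-K})), identifies each graded piece as an induced complex $\st_t(\rho_k)\overrightarrow{\times}K_\varrho(s-tm(\varrho)l^k,<\cdot)^\bullet$ whose cohomology is known by the induction hypothesis, and then exploits the spectral sequence $E_1^{i,j}=h^{i+j}\gr_{-i}\Rightarrow 0$, whose differentials are induced from $K_\varrho(s)^\bullet$, to solve for the unknown entries $E_1^{0,j}$, obtaining $E_1^{0,-t_k}\simeq[\overrightarrow{t_k-1}]_{\rho_k}$ (with $t_k=\delta$ in the divisible case). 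To salvage your outline you would have to replace the chain-level ``minimal segment extraction'' by this inductive devissage; as it stands the proposal has a genuine gap, and a false intermediate claim, at its key step.
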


\begin{proof}
On raisonne par rÈcurrence sur $s$ quel que soit $\varrho$; l'initialisation Ètant triviale supposons
donc le rÈsultat acquis jusqu'au rang $s-1$ et traitons le cas de $s$.

Soit $u$ tel que $m(\varrho)l^u \leq s < m(\varrho)l^{u+1}$; pour tout $0 \leq k \leq u$, on note 
$t_k \geq 1$ tel que $t_km(\varrho)l^k \leq s < (t_k+1)m(\varrho)l^k$ et on considËre la filtration 
suivante de $K_\varrho(s)^\bullet$:
\begin{multline} \label{filtration-K}
K_\varrho(s,>t_u \underline{\delta_u})^\bullet \subset K_\varrho(s,>(t_u-1) \underline{\delta_u})^\bullet
\subset \cdots \subset K_\varrho(s,>\underline{\delta_u})^\bullet \\
\subset  K_\varrho(s,>t_{u-1} \underline{\delta_{u-1}})^\bullet \subset \cdots \subset
K_\varrho(s,>\underline{\delta_{u-1}})^\bullet \\
\cdots \subset K_\varrho(s,>t_0 \underline{\delta_0})^\bullet \subset \cdots \subset 
K_\varrho(s,>\underline{\delta_0})^\bullet \subset K_\varrho(s)^\bullet
\end{multline}
On a alors les propriÈtÈs suivantes:
\begin{itemize}
\item pour $1 \leq k \leq u$, $K_\varrho(s,>t_{k-1}.\underline{\delta_{k-1}})^\bullet / 
K_\varrho(s,>\underline{\delta_k})^\bullet$
est le complexe
$$\st_{t_{k-1}}(\rho_{k-1}) \overrightarrow{\times} 
K_\varrho(s-t_{k-1}m(\varrho)l^{k-1},<\underline{\delta_0})^\bullet,$$
dont la cohomologie est, d'aprËs l'hypothËse de rÈcurrence, nulle sauf si $s=t_{k-1}m(\varrho)l^{k-1}$ auquel cas
$\hi^0$ est le seul $\hi^i$ non nul, et alors isomorphe ‡ $\st_{t_{k-1}}(\rho_{k-1})$;

\item pour $0 \leq k \leq u$ et $1 \leq t < t_k$, le complexe $K_\varrho(s,>t.\underline{\delta_k})^\bullet
/ K_\varrho(s,>(t+1).\underline{\delta_k})^\bullet$ est le complexe
$$\st_{t}(\rho_{k}) \overrightarrow{\times} K_\varrho(s-tm(\varrho)l^{k},<\underline{\delta_k})^\bullet,$$
dont la cohomologie est, d'aprËs l'hypothËse de rÈcurrence, nulle sauf si $s=t_km(\varrho)l^{k}$
auquel cas $\hi^{t-t_k}$ est le seul $\hi^i$ non nul, et alors isomorphe ‡ 
$\st_{t}(\rho_{k}) \overrightarrow{\times} \speh_{t_k-t}(\rho_k)$.
\end{itemize}
ConsidÈrons alors la suite spectrale de cohomologie $E_1^{i,j}=\hi^{i+j} gr_{-i} \Rightarrow E_\oo^{i+j}$,
associÈe ‡ la filtration \ref{filtration-K}, dont on rappelle que  l'aboutissement $E_\oo^{i+h}$ est nul.
\begin{itemize}
\item D'aprËs les propriÈtÈs prÈcÈdentes les $E_1^{i,j}$ sont connus par
rÈcurrence pour tout $i>0$. PrÈcisÈment, pour $0 \leq k \leq u$ tel que $s=m(\varrho)l^kt_k$, il existe $i_1<i_2< \cdots <i_{t_k}$ tels que
pour $1 \leq r \leq t_k$,
$$E_1^{i_r,-t_k-i_r+r}=\st_{\underline 0}(\rho_k) \overrightarrow{\times} \speh_{t_k-r}(\rho_k), \quad
\underline 0 \in \IC_{\rho_k}(r).$$

\item Par ailleurs comme les flËches $E_1^{i_r,-t_k-i_r+r} \rightarrow E_1^{i_{r+1},-t_k-i_{r+1}+r+1}$ 
sont induites par celles de $K_\varrho(s)^\bullet$, on en dÈduit, en utilisant l'hypothËse de rÈcurrence, 
que $E_1^{0,-t_k}$ est isomorphe ‡ $\speh_{t_k}(\rho_k)$, d'o˘ le rÈsultat.
\end{itemize}

\end{proof}

\appendix

\section{Rappels sur les reprÈsentations}

\renewcommand{\theequation}{\Alph{section}.\arabic{subsection}.\arabic{smfthm}}

\subsection{de \texorpdfstring{$GL_n(K)$}{Lg} ‡ coefficients dans \texorpdfstring{$\overline \Qm_{l}$}{Lg}}
\label{para-rappel-rep}

Dans la suite $K$ dÈsigne un corps local non archimÈdien dont le corps rÈsiduel est de cardinal $q$ une puissance de $p$ et on rappelle quelques notations de \cite{boyer-invent2} sur
sur les reprÈsentations admissibles de $GL_n(K)$ ‡ coefficients dans $\overline \Qm_l$ o˘ $l$ un nombre premier distinct de $p$.

\begin{nota}
Une racine carrÈe $q^{\frac{1}{2}}$ de $q$ dans $\overline \Qm_l$ Ètant fixÈe,
pour $k \in \frac{1}{2} \Zm$, nous noterons
$\pi\{ k \}$ la reprÈsentation tordue de $\pi$ de sorte que l'action d'un ÈlÈment $g \in GL_n(K)$ est donnÈe par $\pi(g) \nu(g)^k$ avec
$\nu: g \in GL_n(K) \mapsto q^{-\val (\det g)}$.
\end{nota}

\begin{defis}
\begin{itemize}
\item Soit $P=MN$ un parabolique de $GL_n$ de LÈvi $M$ et de radical unipotent $N$.
On note $\delta_P:P(K) \rightarrow R^\times$ l'application dÈfinie par
$$\delta_P(h)=|\det (\ad(h)_{|\lie N})|^{-1}.$$

\item Pour $(\pi_1,V_1)$ et $(\pi_2,V_2)$ des $R$-reprÈsentations de respectivement $GL_{n_1}(K)$ et $GL_{n_2}(K)$,
et $P$ un parabolique de $GL_{n_1+n_2}$ de Levi $M=GL_{n_1} \times GL_{n_2}$ et de radical unipotent $N$,
$$\pi_1 \times_{P} \pi_2$$
dÈsigne l'induite parabolique normalisÈe de $P(K)$ ‡ $GL_{n_1+n_2}(K)$ de $\pi_1 \otimes \pi_2$ c'est ‡ dire
l'espace des fonctions $f:GL_{n_1+n_2}(K) \rightarrow V_1 \otimes V_2$ telles que
$$f(nmg)=\delta_P^{-1/2}(m) (\pi_1 \otimes \pi_2)(m) \Bigl ( f(g) \Bigr ),
\quad \forall n \in N, ~\forall m \in M, ~ \forall g \in GL_{n_1+n_2}(K).$$

\rem en particulier si $P$ est standard alors $\pi_1 \times_P \pi_2$ est l'induite \og classique \fg{} de
$\pi_1\{ n_2/2 \} \otimes \pi_2 \{ -n_1/2\}$.

\item \textit{Foncteurs de Jacquet}: pour $\pi$ une $R$-reprÈsentation admissible de $GL_n(K)$,
l'espace des vecteurs $N(K)$-coinvariants est stable sous l'action de
$M(K) \simeq P(K)/N(K)$. On notera $J_P(\pi)$ cette reprÈsentation tordue par $\delta_P^{-1/2}$.
\end{itemize}
\end{defis}

\begin{notas} Dans le cas o˘ le parabolique est standard de Levi
$GL_{r_1} \times GL_{r_2} \times \cdots \times GL_{r_k}$, on le notera
$P_{r_1,\cdots,r_k}$ et $\times$ dÈsignera $\times_P$.
\end{notas}

\begin{defis}
Une reprÈsentation $\pi$ de $GL_n(K)$ est dite \textit{cuspidale} si elle n'est pas un sous-quotient d'une induite parabolique propre.

\medskip

Soient $g$ un diviseur de $d=sg$ et $\pi$ une reprÈsentation cuspidale
irrÈductible de $GL_g(K)$. L'induite parabolique
$$\pi\{ \frac{1-s}{2} \} \times \pi\{\frac{3-s}{2} \} \times \cdots \times \pi\{ \frac{s-1}{2} \}$$
possËde
\begin{itemize}
\item un unique quotient irrÈductible notÈ $\st_s(\pi)$; c'est une reprÈsentation de Steinberg gÈnÈralisÈe.

\item une unique sous-reprÈsentation irrÈductible notÈe $\speh_s(\pi)$; c'est une reprÈsentation de Speh gÈnÈralisÈe.
\end{itemize}
%
%
\end{defis}

Afin d'Èviter d'avoir ‡ Ècrire systÈmatiquement toutes ces torsions, on introduit les notations suivantes.

\begin{notas} Un entier $g \geq 1$ Ètant fixÈ, pour $\pi_1$ et $\pi_2$ des reprÈsentations de respectivement $GL_{t_1g}(K)$ et $GL_{t_2g}(K)$, on notera
$$\pi_1 \overrightarrow{\times} \pi_2 =\pi_1 \{ -\frac{t_2}{2} \} \times \pi_2 \{ \frac{t_1}{2} \} 
$$
\end{notas}
%
%

\subsection{de \texorpdfstring{$GL_d(K)$}{Lg} mod \texorpdfstring{$l$}{Lg}}
\label{para-GL-l}

On rappelle que $l$ et $p$ dÈsignent des nombres premiers distincts et que $q$ est une puissance de $p$. On note
$e_l(q)$ l'ordre de l'image de $q$ dans $\Fm_l^\times$.
Afin de simplifier la lecture, dans la suite on utilisera la lettre $\pi$ pour dÈsigner
une $\overline \Qm_l$-reprÈsentation entiËre et les lettres $\varrho$ et $\rho$ pour
des $\overline \Fm_l$-reprÈsentations.

\begin{defi} \phantomsection \label{defi-mrho}
Une reprÈsentation $\varrho$ de $GL_n(K)$ est dite 
\begin{itemize}
\item \emph{cuspidale} si pour tout sous-groupe parabolique propre $P$ de $GL_n(K)$, 
$J_{P}(\varrho)$ est nul.

\item  Elle sera dite \emph{supercuspidale} si elle n'est pas un sous-quotient d'une induite
parabolique propre.
\end{itemize}
\end{defi}

\begin{prop} \label{prop-red-modl} (cf. \cite{vigneras-livre} III.5.10)
La rÈduction modulo $l$ d'une $\overline \Qm_l$-reprÈsentation irrÈductible cuspidale entiËre
de $GL_g(K)$ est irrÈductible cuspidale.
\end{prop}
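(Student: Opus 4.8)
The plan is to treat the two assertions separately: the cuspidality of the reduction by a soft argument with Jacquet functors, and its irreducibility by reducing, through the theory of types, to a statement about finite general linear groups. Fix an integral irreducible cuspidal $\bar \Qm_l$-representation $\pi$ of $GL_g(K)$ and a $GL_g(K)$-stable $\bar \Zm_l$-lattice $L$, so that $r_l(\pi):=\bar \Fm_l\otimes_{\bar \Zm_l}L$ is a smooth representation of finite length whose semisimplification does not depend on $L$. To see that $r_l(\pi)$ is cuspidal in the sense of Definition \ref{defi-mrho}, i.e. $J_P(r_l(\pi))=0$ for every proper parabolic $P=MN$, I would use that the normalized Jacquet functor $J_P$ is exact on smooth representations, carries $\bar \Zm_l$-lattices to $\bar \Zm_l$-lattices, and commutes with the base change $\bar \Zm_l\to \bar \Fm_l$: indeed $N(K)$-coinvariants form a filtered colimit of averages over compact open subgroups of $N(K)$, which are pro-$p$ with $p\neq l$, so no $l$-torsion obstruction arises and $J_P(r_l(\pi))\simeq r_l(J_P(\pi))=0$. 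As $J_P$ is exact, every irreducible subquotient of $r_l(\pi)$ is then cuspidal too. (Equivalently, $r_l(\pi)$ inherits from $\pi$ a matrix coefficient compactly supported modulo the centre, which for $GL_g(K)$ already forces cuspidality.)

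For the irreducibility, recall that over $\bar \Qm_l$ cuspidal means supercuspidal, so by Bushnell--Kutzko, in the integral form due to Vign\'eras, one has $\pi\simeq \ind_{\tilde J}^{GL_g(K)}\Lambda$ with $\tilde J$ open and compact modulo the centre and $\Lambda\simeq\kappa\otimes\sigma$ irreducible, where $\kappa_{|J^1}$ is the canonical irreducible representation of a pro-$p$ group $J^1$ and $\sigma$ is inflated from an irreducible cuspidal representation of a finite general linear group $GL_m(k_E)$. Integrality of $\pi$ forces integrality of $\Lambda$ (the central character is integral and the remaining data is finite-dimensional over a compact group), so $r_l(\Lambda)$ is defined, and since compact induction is exact and commutes with $\bar \Zm_l\to\bar \Fm_l$ one gets $r_l(\pi)\simeq\ind_{\tilde J}^{GL_g(K)}r_l(\Lambda)$. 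I would then check in turn: $r_l(\kappa)$ stays irreducible, because $\kappa_{|J^1}$ is a representation of a pro-$p$ group and $l\neq p$, so $r_l(\kappa_{|J^1})$ is irreducible and $r_l(\kappa)$, restricting irreducibly to the normal subgroup $J^1$, is irreducible as well; $r_l(\sigma)$ stays irreducible, by the $l$-modular representation theory of finite general linear groups; hence $r_l(\Lambda)$ is irreducible; and finally $\End_{GL_g(K)}\big(\ind_{\tilde J}^{GL_g(K)}\Lambda\big)$ is the commutative Hecke algebra $\bar \Zm_l[X,X^{-1}]$, whose formation commutes with reduction, so the simple quotients of $\ind_{\tilde J}^{GL_g(K)}r_l(\Lambda)$ are exactly the $\ind_{\tilde J}^{GL_g(K)}r_l(\Lambda)/(X-\beta)$, $\beta\in\bar \Fm_l^\times$, all irreducible; one of them is $r_l(\pi)$. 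Together with the first paragraph this gives that $r_l(\pi)$ is irreducible cuspidal. Note that it need not be supercuspidal: $r_l(\sigma)$ may be cuspidal but not supercuspidal as a representation of the finite group, which is exactly the mechanism behind the non-supercuspidal case treated in \S\ref{para-cuspi}.

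The only genuinely nontrivial input is the finite-group statement that the reduction modulo $l$ of an irreducible cuspidal $\bar \Qm_l$-representation of $GL_m(\Fm_q)$ is irreducible; everything else is either formal (exactness and flatness of compact induction, commutation of $J_P$ with reduction) or elementary ($l$-adic representations of pro-$p$ groups). In practice one would quote it, or re-prove it by writing such a cuspidal representation as $\pm R_T^\theta$ for the anisotropic maximal torus $T$ of $GL_m(\Fm_q)$, splitting $\theta=\theta_l\theta_{l'}$, and verifying that after reduction modulo $l$ the a priori several constituents carried by $\pm R_T^{\theta_{l'}}$ recombine into a single irreducible Brauer character. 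This is the step I expect to be the main obstacle.
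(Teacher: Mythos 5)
You should first note that the paper does not actually prove this proposition: it is recalled with a reference to \cite{vigneras-livre} III.5.10, so there is no internal argument to compare yours with, and the real question is whether your sketch stands on its own. Your cuspidality half does: since the unipotent radical $N$ is a union of compact open pro-$p$ subgroups and $p\neq l$, the functor of $N$-coinvariants is exact over $\bar \Zm_l$ and commutes with $-\otimes_{\bar \Zm_l}\bar \Fm_l$, so $J_P(L)\hookrightarrow J_P(\pi)=0$ for a stable lattice $L$ and hence $J_P(r_l(\pi))=0$ for every proper $P$; this is the standard argument and it is complete.

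The irreducibility half has a genuine soft spot at the Hecke-algebra step. First, a conflation: if $\tilde J$ denotes the compact-mod-centre extended type group, then $\ind_{\tilde J}^{GL_g(K)}\Lambda$ is $\pi$ itself and its endomorphism algebra is reduced to the scalars; the Laurent algebra $\bar \Zm_l[X,X^{-1}]$ is $\End$ of the induction from the \emph{compact} group $J$. More seriously, even granting $H_{\bar \Fm_l}(GL_g(K),r_l(\lambda))\simeq \bar \Fm_l[X,X^{-1}]$, the assertion that the fibres $\ind/(X-\beta)$ are irreducible is precisely the nontrivial point in characteristic $l$: it does not follow formally from commutativity of the Hecke algebra nor from its compatibility with reduction (over $\bar \Fm_l$, $\End$ being small does not control the submodule structure). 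To close this you need the computation of the intertwining of $r_l(\lambda)$ (which already uses that $r_l(\sigma)$ remains cuspidal, i.e. the James/Dipper--James theorem you quote) together with an irreducibility criterion for compact induction valid over $\bar \Fm_l$ -- for instance via semisimplicity of the restriction to the pro-$p$ group $J^1$, or Vign\'eras's quasi-projectivity of $\ind_J^{GL_g(K)}r_l(\lambda)$. So your route can be completed, but only by reproducing a substantial part of Vign\'eras's mod-$l$ type theory. It is also worth knowing that the proof behind the citation is the softer derivative/Whittaker-model argument: once all constituents of $r_l(\pi)$ are cuspidal, exactness of the Whittaker functor and multiplicity one give exactly one generic constituent, and mod-$l$ cuspidal representations of $GL_g(K)$ are generic, whence irreducibility; this avoids both the type theory and the finite-group input that you single out as the main obstacle.
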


\begin{prop} \cite{dat-jl} \S 2.2.3 \\
Soit $\pi$ une reprÈsentation irrÈductible cuspidale entiËre. Alors pour tout $s \geq 1$, la rÈduction 
modulo $l$ de $\speh_{s}(\pi)$ est irrÈductible.
\end{prop}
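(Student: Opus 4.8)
The plan is to argue by induction on $s$; the case $s=1$ is Proposition \ref{prop-red-modl}, since $\speh_1(\pi)=\pi$. Set $N=sg$ and $\varrho=r_l(\pi)$, which is supercuspidal by hypothesis. The two standard facts I would use are: (i) reduction modulo $l$ is exact and commutes with all Jacquet functors on integral representations (\cite{vigneras-livre}); and (ii) for a single segment, the Jacquet module of $\speh_s(\pi)$ along any standard parabolic is a single irreducible constituent (\cite{zelevinski2}).

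First I would record, for every maximal parabolic $P_{jg,(s-j)g}$ with $1\le j\le s-1$, the identity
$$J_{P_{jg,(s-j)g}}\bigl(\speh_s(\pi)\bigr)\simeq \speh_{j}\bigl(\pi\{\tfrac{j-s}{2}\}\bigr)\otimes\speh_{s-j}\bigl(\pi\{\tfrac{j}{2}\}\bigr),$$
a single term by (ii). Applying $r_l$ and using (i) together with the induction hypothesis (the relevant twists of $\pi$ still have supercuspidal reduction, twisting by an unramified character preserving supercuspidality), both tensor factors have irreducible reduction, so $J_{P_{jg,(s-j)g}}\bigl(r_l(\speh_s(\pi))\bigr)$ is an external tensor product of two irreducibles, hence irreducible: it has length one.

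On the other hand, $r_l(\speh_s(\pi))$ is a subquotient of $r_l\bigl(\pi\{\tfrac{1-s}{2}\}\times\cdots\times\pi\{\tfrac{s-1}{2}\}\bigr)=\varrho\{\tfrac{1-s}{2}\}\times\cdots\times\varrho\{\tfrac{s-1}{2}\}$; this increasingly ordered induced module has, by the modular classification (M\'\i nguez--S\'echerre; cf. \cite{vigneras-livre}), an irreducible socle, namely the modular Speh representation $\langle\Delta\rangle$ of the segment $\Delta$, and taking for $\speh_s(\pi)$ the lattice saturated inside the standard lattice of the ambient module one gets that $\langle\Delta\rangle$ is the socle of $r_l(\speh_s(\pi))$ and occurs in it exactly once. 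Since $\langle\Delta\rangle$ has nonzero Jacquet module along every $P_{jg,(s-j)g}$, the length-one statement of the previous step forces \emph{every other} constituent of $r_l(\speh_s(\pi))$ to have zero Jacquet module along every maximal parabolic, i.e. to be cuspidal. But each constituent has supercuspidal support the multiset $\{\varrho\{\tfrac{1-s}{2}\},\dots,\varrho\{\tfrac{s-1}{2}\}\}$ of $s$ supercuspidal representations of $GL_g(K)$, and a cuspidal $\bar\Fm_l$-representation of $GL_N(K)$ with such a support exists only when $s=m(\varrho)l^u$ for some $u\ge0$, in which case it must be $\varrho_u$ up to an unramified twist. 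Thus for $s\notin\{m(\varrho)l^u:u\ge0\}$ one already concludes that $r_l(\speh_s(\pi))=\langle\Delta\rangle$ is irreducible; and in the remaining case the plan is to eliminate $\varrho_u$ from $r_l(\speh_s(\pi))$: it appears as the cosocle of $r_l(\st_s(\pi))$, and the whole of its multiplicity in $r_l\bigl(\varrho\{\tfrac{1-s}{2}\}\times\cdots\times\varrho\{\tfrac{s-1}{2}\}\bigr)$ is already accounted for by $r_l(\st_s(\pi))$ (and by the reductions of the remaining constituents of $\pi\{\tfrac{1-s}{2}\}\times\cdots\times\pi\{\tfrac{s-1}{2}\}$, controlled by induction), leaving none for $\speh_s(\pi)$. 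Carrying this out rests on the description of the reduction modulo $l$ of the generalized Steinberg representations in \cite{boyer-repmodl}, and it is precisely here that the supercuspidality of $r_l(\pi)$ — rather than mere cuspidality — is indispensable.

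I expect this multiplicity bookkeeping in the case $s=m(\varrho)l^u$ to be the one genuinely delicate point. An alternative, heavier but formally cleaner route avoids it altogether: by Bushnell--Kutzko type theory in its $\bar\Zm_l$-integral form, the block of $\bar\Zm_l$-representations of $GL_N(K)$ whose supercuspidal support is a power of $\pi$ is equivalent to the category of modules over an affine Hecke algebra of type $A$ with a single parameter, and under this equivalence $\speh_s(\pi)$ corresponds to a $\bar\Zm_l$-lattice of rank one; its reduction modulo $l$ is then one-dimensional, hence irreducible.
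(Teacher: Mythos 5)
You should first note that the paper itself does not prove this proposition: it is quoted from \cite{dat-jl} \S 2.2.3, so your argument has to stand entirely on its own. Its skeleton is sound: induction on $s$, the Zelevinsky computation showing $J_{P_{jg,(s-j)g}}(\speh_s(\pi))$ is a single tensor product of smaller Speh representations \cite{zelevinski2}, commutation of Jacquet functors with $r_l$ on integral representations, hence length one for $J_{P_{jg,(s-j)g}}(r_l(\speh_s(\pi)))$, hence at most one non-cuspidal constituent. But the proof is incomplete at exactly the point where the hypothesis \emph{supercuspidale} (rather than merely cuspidale) intervenes: the case $s=m(\varrho)l^u$, where the cuspidal non-supercuspidal candidates $\varrho_u\nu^i$ must be excluded from $r_l(\speh_s(\pi))$. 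You only sketch a multiplicity bookkeeping, defer it to \cite{boyer-repmodl}, and yourself flag it as the delicate point; and the alternative route via integral type theory is a one-sentence assertion (that $\speh_s(\pi)$ corresponds to a rank-one lattice module and that the block equivalence over $\bar \Zm_l$ is compatible with reduction is precisely what would have to be established -- this is essentially the content of Dat's actual argument, not a shortcut around it). A secondary soft spot: the two facts you import on the modular side (the mod-$l$ induced in increasing order has irreducible socle $\langle\Delta\rangle$, and $\langle\Delta\rangle$ has nonzero Jacquet module along \emph{every} $P_{jg,(s-j)g}$) come from M\'{\i}nguez--S\'echerre, are not among the paper's quoted tools, and do real work in your argument -- without them the length-one statement only bounds the number of non-cuspidal constituents by $s-1$, not by $1$; they are also not obvious when $\epsilon(\varrho)=1$, i.e. when all the $\varrho\{i\}$ coincide.

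The main gap can be closed more cheaply than by the combinatorics of \cite{boyer-repmodl}. Every cuspidal non-supercuspidal $\bar \Fm_l$-representation is of the form $\st_k(\varrho')$, hence non-degenerate; the theory of derivatives is available mod $l$ (\cite{vigneras-livre}), so the induced $\varrho\{\frac{1-s}{2}\}\times\cdots\times\varrho\{\frac{s-1}{2}\}$ carries a one-dimensional space of Whittaker functionals and therefore has exactly one non-degenerate constituent, counted with multiplicity one. On the other hand $\st_s(\pi)$ is generic and admits an integral Whittaker functional, so $r_l(\st_s(\pi))$ already contains a non-degenerate constituent (this is also visible on Theorem \ref{theo-ss-quotient}). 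Since for $s\geq 2$ the constituents $\st_s(\pi)$ and $\speh_s(\pi)$ of the $\bar \Qm_l$-induced are distinct and multiplicities add in the Grothendieck group, no non-degenerate -- in particular no cuspidal -- constituent is left over for $r_l(\speh_s(\pi))$. With that in place, and with precise references for the two M\'{\i}nguez--S\'echerre facts above, your induction does give the irreducibility of $r_l(\speh_s(\pi))$.
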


\begin{defi} Une $\overline \Fm_l$-reprÈsentation irrÈductible est dite $l$-Speh (resp. $l$-superSpeh)
si c'est la rÈduction modulo $l$ d'une $\overline \Qm_l$-reprÈsentation entiËre $\speh_s(\pi)$
pour $\pi$ irrÈductible cuspidale (resp. et dont la rÈduction modulo $l$ de $\pi$ est supercuspidale). 
\end{defi}

\begin{nota}
On notera  $\epsilon(\varrho)$ le cardinal de la droite de Zelevinski de
$\varrho$, i.e. de l'ensemble des classes d'Èquivalence $\{ \varrho\{ i\} ~/~ i \in \Zm \}$.
 On pose alors  cf. \cite{vigneras-induced} p.51
$$m(\varrho)=\left\{ \begin{array}{ll} \epsilon(\varrho), & \hbox{si } \epsilon(\varrho)>1; \\ l, & \hbox{sinon.} \end{array} \right.$$
\end{nota}

\rem $\epsilon(\varrho)$ est un diviseur de $e_l(q)$.

\begin{defi} \phantomsection \label{defi-nd}
…tant donnÈ un multi-ensemble $\underline s=\{ \rho_1^{n_1},\cdots,\rho_r^{n_r} \}$
de reprÈsentations cuspidales, on note d'aprËs
\cite{vigneras-induced} V.7, $\st(\underline s)$ l'unique reprÈsentation non dÈgÈnÈrÈe de l'induite
$$\rho(\underline s):=\overbrace{\rho_1 \times \cdots \times \rho_1}^{n_1} \times \cdots \times \overbrace{\rho_r \times \cdots \times \rho_r}^{n_r}.$$
\end{defi}

\rem d'aprËs \cite{vigneras-induced} V.7, toutes les reprÈsentations non dÈgÈnÈrÈes sont de cette forme.

\begin{nota} \phantomsection \label{nota-st}
Pour $\rho$ une reprÈsentation irrÈductible cuspidale et $s \geq 1$, on note
$\underline s(\rho)$ le multi-segment $\{ \rho, \rho \{ 1 \}, \cdots , \rho\{ s-1 \} \}$ et comme dans
\cite{vigneras-induced} V.4, $\st_s(\rho):=\st(\underline s(\rho))$.
\end{nota}

\begin{prop} \cite{vigneras-induced} V.4
Soit $\varrho$ une reprÈsentation irrÈductible cuspidale.
La reprÈsentation non dÈgÈnÈrÈe $\st_s(\varrho)$ est cuspidale si et seulement $s=1$ ou $m(\varrho)l^k$ pour $k \geq 0$.
\end{prop}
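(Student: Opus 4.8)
The plan is to convert cuspidality into the vanishing of finitely many Jacquet modules and then run an induction on $s$, the only real content sitting in one base case. Recall first that an irreducible representation $\tau$ of $GL_n(K)$ is cuspidal as soon as $J_Q(\tau)=0$ for every proper \emph{maximal} standard parabolic $Q$ — each proper standard parabolic lies in a maximal one and $J$ is transitive. Since $\varrho$ is cuspidal, $J_P(\varrho\{i\})=0$ for every proper $P$, so by the geometric lemma every Jacquet module of the induced representation $\varrho\times\varrho\{1\}\times\cdots\times\varrho\{s-1\}$, hence every Jacquet module of its non-degenerate constituent $\st_s(\varrho)$, vanishes outside parabolics whose Levi has all its $GL$-factors of size a multiple of $g$. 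Thus $\st_s(\varrho)$ is cuspidal if and only if $J_P(\st_s(\varrho))=0$ for $P$ of type $(ag,(s-a)g)$, $1\le a\le s-1$. Since $J_P$ is exact I compute in the Grothendieck group: the geometric lemma gives $J_P$ of $\varrho\times\cdots\times\varrho\{s-1\}$, and combining this with the mod-$l$ Zelevinski-style description of the Jordan--H\"older constituents of that induced representation pins down $J_P(\st_s(\varrho))$; the crucial feature is that over $\bar\Fm_l$ the twists $\varrho\{i\}$ repeat with period $\epsilon(\varrho)$, so these contributions may cancel modulo $l$.

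Now I induct on $s$. If $s=1$ this is the hypothesis, and $s=1$ lies in the asserted set. If $s>1$ and $m(\varrho)\nmid s$, I split off a suitable sub-segment of length $<m(\varrho)$ (a proper parabolic since $s\ge 2$): a block of length $<m(\varrho)$ being banal, the corresponding term in that Jacquet module survives modulo $l$, so $\st_s(\varrho)$ is not cuspidal — consistent with $s\notin\{1\}\cup\{m(\varrho)l^{k}\}$. If $m(\varrho)\mid s$ and $s>m(\varrho)$, write $s=m(\varrho)s'$ with $1<s'<s$ and set $\varrho'=\st_{m(\varrho)}(\varrho)$; by the inductive hypothesis applied to $(\varrho,m(\varrho))$ the representation $\varrho'$ is cuspidal, its segment is a union of complete $\epsilon(\varrho)$-periods so $\varrho'$ is fixed by the twist $\{1\}$, whence $\epsilon(\varrho')=1$ and $m(\varrho')=l$; moreover $\st_s(\varrho)\simeq\st_{s'}(\varrho')$. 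The proposition for $(\varrho',s')$, available since $s'<s$, then says $\st_{s'}(\varrho')$ is cuspidal iff $s'\in\{1,l,l^{2},\dots\}$, i.e. iff $s=m(\varrho)s'\in\{m(\varrho),m(\varrho)l,\dots\}$, which is exactly the claimed condition; so this case is settled in both directions.

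The only remaining case is $s=m(\varrho)$, where one must prove directly that $\st_{m(\varrho)}(\varrho)$ is cuspidal. Via a Bushnell--Kutzko type for $\varrho$ this transports to a statement about a module over an affine Hecke algebra of rank $m(\varrho)$ over $\bar\Fm_l$ — the image of $\st_{m(\varrho)}(\varrho)$ being the Steinberg (sign) module — cuspidality becoming the assertion that this module is not obtained by parabolic induction, equivalently the classical fact that the Steinberg representation of $GL_{m(\varrho)}$ over $\bar\Fm_l$ is cuspidal in this first non-banal range, which is the very mechanism producing non-supercuspidal cuspidal representations modulo $l$. I expect this base case to be the main obstacle; granted it, the induction above — essentially bookkeeping with the geometric lemma and the period $\epsilon(\varrho)$ — finishes the proof.
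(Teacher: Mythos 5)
First, note that the paper does not prove this statement: it is quoted verbatim from Vign\'eras (\cite{vigneras-induced} V.4) and used as an external input, so the only question is whether your argument would stand on its own. Its skeleton is appealing — in particular the telescoping step is sound: $\st_{m(\varrho)s'}(\varrho)\simeq\st_{s'}\bigl(\st_{m(\varrho)}(\varrho)\bigr)$ does hold, because $\st_{s'}(\varrho')$ is a non-d\'eg\'en\'er\'ee subquotient of $\rho(\underline s(\varrho))$ and such a constituent is unique (\cite{vigneras-induced} V.7, quoted in \ref{defi-nd}); likewise $\epsilon(\varrho')=1$, $m(\varrho')=l$. But this reduction only shifts the whole content of the proposition onto the two points you leave open, and both are genuine gaps. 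The base case $s=m(\varrho)$ is not a ``classical fact'' you may grant yourself: cuspidality of $\st_{m(\varrho)}(\varrho)$ \emph{is} the heart of Vign\'eras' theorem (for $\varrho$ an unramified character of $GL_1(K)$ it is literally the statement that the mod~$l$ Steinberg of $GL_{e_l(q)}(K)$ is cuspidal), so invoking it is circular. Moreover your Hecke-algebra transfer is itself incomplete: cuspidality means vanishing of all Jacquet modules, not ``not parabolically induced'', and to translate it into a statement about the sign module of an affine Hecke algebra of rank $m(\varrho)$ over $\bar\Fm_l$ you need the compatibility of the type equivalence with parabolic induction/restriction in the mod-$l$ setting, which you neither state nor source, and then an actual computation deciding when that module is supersingular.

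The second gap is the non-cuspidality direction when $m(\varrho)\nmid s$, $s\geq 2$. Saying that a banal block of length $<m(\varrho)$ ``survives modulo $l$'' is not an argument: what must be shown is that some $J_P$ of the \emph{generic constituent} $\st_s(\varrho)$ is nonzero, whereas the geometric lemma only computes $J_P$ of the full induced $\rho(\underline s(\varrho))$, whose Jacquet module is distributed among all Jordan--H\"older factors. Your proposed remedy — ``the mod-$l$ Zelevinski-style description of the Jordan--H\"older constituents'' of $\rho(\underline s(\varrho))$ — is not available as a black box independent of the present proposition; describing those constituents (and the Jacquet modules of the non-degenerate one, cf.\ the second half of Theorem \ref{theo-ss-quotient}) is precisely the kind of analysis Vign\'eras carries out to prove V.4, and Grothendieck-group bookkeeping alone cannot decide which constituent receives the non-vanishing term. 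So as written the proof establishes neither direction beyond the formal induction; the two deferred steps are exactly the substance of the cited result.
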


\rem d'aprËs \cite{vigneras-livre} III-3.15 et 5.14, toute reprÈsentation irrÈductible cuspidale est de la forme
$\st_s(\varrho)$ pour $\varrho$ irrÈductible supercuspidale et $s=1$ ou de la forme $m(\varrho)l^k$ avec 
$k \geq 0$. 

\begin{nota} \phantomsection \label{nota-rhoi}
Soit $\varrho$ une reprÈsentation irrÈductible cuspidale de $GL_g(K)$; on note 
$\varrho_{-1}=\varrho$ et pour tout $i \geq 0$, $\varrho_i=\st_{m(\varrho)l^i}(\varrho)$.
\end{nota}

\begin{defi}
On dira d'une $\overline \Qm_l$-reprÈsentation irrÈductible cuspidale entiËre qu'elle est
de type $\varrho$ si, ‡ torsion par un caractËre non ramifiÈ prËs, 
sa rÈduction modulo $l$ est de la forme $\varrho_i$ pour $i \geq -1$.
\end{defi}

\begin{nota}
Soit $s \geq 1$ un entier et $\varrho$ une reprÈsentation irrÈductible cuspidale de $GL_g(K)$. Soit
$\IC_\varrho(s)$ l'ensemble des suites $(m_{-1},m_0,\cdots)$ ‡ valeurs dans $\Nm$ telles que
$$s=m_{-1}+m(\varrho)\sum_{k=0}^{+\oo} m_k l^k.$$
On notera $\lg_\varrho(s)$ le cardinal de $\IC_{\varrho}(s)$.
\end{nota}

\begin{defi} \phantomsection \label{defi-spm}
Pour $\underline i=(i_{-1},i_0,\cdots) \in \IC_\varrho(s)$, on dÈfinit
$$\st_{\underline i}(\varrho):= \st_{\underline{i_{-1}^-}}(\varrho_{-1}) \times
\st_{\underline{i_{0}^-}}(\varrho_0) \times \cdots \times \st_{\underline{i_u^-}}(\varrho_u)$$
o˘ $i_k=0$ pour tout $k >u$ et o˘ les $\varrho_i$ sont dÈfinis en \ref{nota-rhoi}.
\end{defi}

\begin{theo} \phantomsection \label{theo-ss-quotient}
Soit $\pi$ une $\overline \Qm_l$-reprÈsentation irrÈductible cuspidale entiËre
de $GL_g(K)$ et $\varrho$ sa rÈduction modulo $l$.
Dans le groupe de Grothendieck des $\overline \Fm_l$-reprÈsentations de $GL_{sg}(K)$, on a l'ÈgalitÈ suivante:
$$r_l \Bigl ( \st_s(\pi) \Bigr )=\sum_{\underline i \in \IC_\varrho(s)} \st_{\underline i}(\varrho).$$
Par ailleurs pour tout $\underline i \in \IC_\varrho(s)$ et pour tout parabolique $P$, $J_{P} \Bigl (
\st_{\underline i} (\varrho) \Bigr )$ est Ègal ‡ la somme des constituants irrÈductibles de $\varrho$-niveau $\underline i$ de $r_l \Bigl (J_{P}(\st_s(\pi)) \Bigr )$.
\end{theo}

\rem pour $s<m(\varrho)$,
la rÈduction modulo $l$ de $\st_{s}(\pi)$ est irrÈductible.

\begin{defi}
On dira que  $l$ est \textit{banal} pour $GL_d(K)$ si $e_l(q)>d$.
\end{defi}

\rem dans le cas banal toute reprÈsentation cuspidale est supercuspidale, i.e. $m(\varrho) < s$ avec les notations
prÈcÈdentes.

\subsection{de \texorpdfstring{$D_{K,d}^\times$}{Lg} ‡ coefficients dans \texorpdfstring{$\overline \Fm_p$}{Lg} et leurs relËvements}
\label{para-repD}

Soit $\tau$ une $\overline \Qm_l$-reprÈsentation irrÈductible de $D_{K,d}^\times$ que
l'on suppose $l$-entiËre, i.e. de caractËre central $l$-entier. Quitte ‡ tordre ce
caractËre central par un caractËre non ramifiÈ, on le suppose trivial sur l'uniformisante
$\varpi$ et donc $\tau$ est une reprÈsentation de $D_{K,d}^\times/\varpi^\Zm$.
On note $\PC_{K,d}$ le radical de $\DC_{K,d}$ et soit
$$1+\PC_{K,d} \subset \DC_{K,d}^\times \subset D_{K,d}^\times/\varpi^\Zm$$
la filtration de quotients successifs $\Fm_{q^d}^\times$ et $\Zm/d\Zm$.

On choisit alors un facteur irrÈductible $\zeta$ de $\tau_{|1+\PC_{K,d}}$; on note
$N_\zeta$ le normalisateur de sa classe d'isomorphisme dans 
$D_{K,d}^\times/\varpi^\Zm$ et soit $\tilde \zeta$ son prolongement ‡ $N_\zeta$:
en effet $1+\PC_{K,d}$ Ètant un pro-$p$-groupe, la dimension de $\zeta$
est une puissance de $p$ de sorte que, un $p$-Sylow de $N_\zeta/(1+\PC_{K,d})$
Ètant cyclique, $\zeta$ admet un prolongement ‡ $N_\zeta$, cf. 
\cite{vigneras-apropos} lemme 1.19.

\begin{prop} (cf. \cite{dat-jl} proposition 2.3.2) \label{prop-repD} \\
Il existe un caractËre $\chi$ tel que
$$\tau \simeq \ind_{J}^{D_{K,d}^\times/\varpi^\Zm} \bigl ( \tilde \zeta_{|J} \otimes \chi \bigr ),$$
o˘ $J$ est un sous-groupe de $D_{K,d}^\times/\varpi^\Zm$ contenant 
$1+\PC_{K,d}$ la forme $N_\zeta \cap N_\chi$,
pour $N_\chi$ est le normalisateur de $\chi$ vÈrifiant les points suivants:
\begin{itemize}
\item $J$ contient $N_\zeta \cap \DC_{K,d}^\times$;

\item il existe des entiers $f',d',e'$ de produit Ègal ‡ $d$ tels que
$$J/(1+\PC_{K,d}) \simeq \Fm^\times_{q^{f'd'}} \rtimes m \Zm/e'd' \Zm,$$
o˘ le gÈnÈrateur de $\Zm/e'd'\Zm$ agit par le Frobenius relatif ‡ $\Fm_{q^{f'}}$
et $m$ est un diviseur de $d'$ tel que 
$$f'm=[D_{K,d}^\times/\varpi^\Zm:\DC_{K,d}^\times J]$$
est le cardinal $e_\tau$ de la classe d'Èquivalence inertielle de $\tau$.

\item L'abÈlianisÈ de $J/(1+\PC_{K,d})$ s'identifie via le morphisme norme,
‡ $\Fm_{q^{f'm}}^\times \times m \Zm/e'd' \Zm$.

\item La rÈduction modulo $l$ de $\tau$ est de cardinal
$$r_\tau=[N_{\zeta} \cap N_{r_l(\chi)}:J]$$
et de la forme 
$$[\bar \tau] + [\bar \tau \nu] + \cdots + [\bar \tau \nu^{r-1}]$$
pour $\bar \tau$ une $\overline \Fm_l$-reprÈsentation irrÈductible de $D_{K,d}^\times$
et $\nu$ le caractËre $g \mapsto q^{\val \circ \nrd (g)}$.

\item Une reprÈsentation $\tau'$ a mÍme rÈduction modulo $l$ si et seulement
si $J'=J$ et $r_l(\chi'), r_l(\chi)$ sont conjuguÈs sous $N_\zeta \cap N_{r_l(\chi)}$.

\item Le nombre $n(\tau)$ de reprÈsentations irrÈductibles strictement congrues ‡ $\tau$
est donnÈe par la formule $\frac{n(\chi)}{[N_{\zeta} \cap N_{r_l(\chi)}:J]}$
o˘ $n(\chi)$ est Ègal ‡ la plus grande puissance de $l$ qui divise
$a(\tau):=\frac{d}{e_\tau} (q^{e_\tau}-1)$.
\end{itemize}
\end{prop}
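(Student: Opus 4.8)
The argument is that of \cite{dat-jl} proposition 2.3.2; here is its structure. Write $G=D_{K,d}^\times/\varpi^{\Zm}$ and $U=1+\PC_{K,d}$, a normal pro-$p$-subgroup with $G/U\simeq\Fm_{q^d}^\times\rtimes\Zm/d\Zm$, where the class of a uniformiser $\Pi$ of $D$ with $\Pi^d=\varpi$ acts on $\Fm_{q^d}^\times$ by $x\mapsto x^q$; every $p$-subgroup of $G/U$ meets $\Fm_{q^d}^\times$ trivially, hence is cyclic. The plan is to apply Clifford theory twice. First, for $U\triangleleft G$ and the chosen irreducible component $\zeta$ of $\tau_{|U}$: Clifford's theorem gives $\tau\simeq\ind_{N_\zeta}^{G}\tau_\zeta$, where $N_\zeta$ is the $G$-stabiliser of $[\zeta]$ and $\tau_\zeta$ the $\zeta$-isotypic part, an irreducible representation of $N_\zeta$. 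As $U$ is pro-$p$, $\dim\zeta$ is a power of $p$, so the obstruction to extending $\zeta$ to $N_\zeta$ has $p$-power order in $H^2(N_\zeta/U,\bar\Qm_l^\times)$, whose $p$-primary part vanishes because a $p$-Sylow of $N_\zeta/U$ is cyclic; this is exactly \cite{vigneras-apropos} lemme~1.19, and it yields an extension $\tilde\zeta$ of $\zeta$ to $N_\zeta$ with $\tau_\zeta\simeq\tilde\zeta_{|N_\zeta}\otimes\sigma$, where $\sigma:=\hom_U(\tilde\zeta,\tau_\zeta)$ is an irreducible representation of $\bar N:=N_\zeta/U$.

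Second, I would run the same analysis for the abelian normal subgroup $\bar A:=(N_\zeta\cap\DC_{K,d}^\times)/U$ of $\bar N$, which sits inside the cyclic group $\Fm_{q^d}^\times$. Choosing a character $\chi_0$ occurring in $\sigma_{|\bar A}$, with stabiliser $\bar N_{\chi_0}$, the quotient $\bar N_{\chi_0}/\bar A$ is cyclic (a subgroup of $\bar N/\bar A\hookrightarrow\Zm/d\Zm$), so $\chi_0$ extends to a character $\chi'$ of $\bar N_{\chi_0}$ and $\sigma\simeq\ind_{\bar N_{\chi_0}}^{\bar N}\chi'$; the projection formula then gives $\tau\simeq\ind_J^{G}\big(\tilde\zeta_{|J}\otimes\chi\big)$, with $J$ the preimage of $\bar N_{\chi_0}$ in $N_\zeta$ (so $J\supseteq N_\zeta\cap\DC_{K,d}^\times$), $\chi$ the inflation of $\chi'$, and $J=N_\zeta\cap N_\chi$. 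The numerical statements are read off from the fine structure of the simple character $\zeta$ and its normaliser, following Bushnell--Kutzko and the account in \cite{dat-jl}: $\zeta$ is attached to a subfield $L\subset D_{K,d}$, forcing $\bar A\simeq\Fm_{q^{f'd'}}^\times$ and $\bar N/\bar A$ cyclic, hence $J/U\simeq\Fm_{q^{f'd'}}^\times\rtimes m\Zm/e'd'\Zm$ with $f'd'e'=d$ and $m\mid d'$, the generator of the Galois part acting by $x\mapsto x^{q^{f'm}}$ and $f'm=[G:\DC_{K,d}^\times J]=e_\tau$. Computing the commutator subgroup of $\Fm_{q^{f'd'}}^\times\rtimes m\Zm/e'd'\Zm$ gives $\big(\Fm_{q^{f'd'}}^\times\big)^{q^{f'm}-1}$, the kernel of the norm $\Fm_{q^{f'd'}}^\times\to\Fm_{q^{f'm}}^\times$, so $(J/U)^{\mathrm{ab}}$ is identified via that norm with $\Fm_{q^{f'm}}^\times\times m\Zm/e'd'\Zm$, of order $\tfrac{d'e'}{m}(q^{f'm}-1)=\tfrac{d}{e_\tau}(q^{e_\tau}-1)=a(\tau)$.

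For the reduction modulo $l$ one first checks that $r_l(\tilde\zeta)$ is irreducible (the Heisenberg-type character $\zeta$ has a $\bar\Zm_l$-model with irreducible reduction, cf. \cite{dat-jl}), so $r_l(\tau)\simeq\ind_J^{G}\big(r_l(\tilde\zeta)_{|J}\otimes r_l(\chi)\big)$, and only $\chi$ can lose rigidity: its reduction has a possibly larger $G$-normaliser, and $M:=N_\zeta\cap N_{r_l(\chi)}\supseteq J$ has cyclic quotient $M/J$, of order $r_\tau:=[M:J]$. On $M$ both $r_l(\tilde\zeta)$ and $r_l(\chi)$ extend, so $\ind_J^{M}\big(r_l(\tilde\zeta)_{|J}\otimes r_l(\chi)\big)$ is the direct sum of the $r_\tau$ twists of a fixed irreducible $M$-representation by the characters of the cyclic group $M/J$; unwinding the two inductions and noting that these characters, being inflated from the cyclic quotient $G/\DC_{K,d}^\times$, are restrictions to $M$ of powers of the unramified character $\nu$, one gets $r_l(\tau)=[\bar\tau]+[\bar\tau\nu]+\cdots+[\bar\tau\nu^{r_\tau-1}]$ in the Grothendieck group, with $\bar\tau:=\ind_M^{G}(\text{that fixed }M\text{-representation})$ irreducible.

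Finally, $\tau'$ is congruent to $\tau$ modulo $l$ precisely when it is built from the same triple $(\zeta,\tilde\zeta,J)$ ($\zeta$ is rigid since $U$ is pro-$p$ and $l\neq p$, and the twist ambiguity of $\tilde\zeta$ is absorbed into $\chi$) together with a character $\chi'$ of $J$ such that $r_l(\chi')$ is $M$-conjugate to $r_l(\chi)$. The $\chi'$ with $r_l(\chi')=r_l(\chi)$ are the products of $\chi$ by a character of $(J/U)^{\mathrm{ab}}$ of $l$-power order, of which there are exactly $l^{v_l(a(\tau))}=n(\chi)$ (with $v_l$ the $l$-adic valuation), and $M/J$ acts freely on this set, identifying those that give isomorphic $\tau'$; hence $n(\tau)=n(\chi)/[M:J]$. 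The Clifford-theoretic skeleton above is routine; the real content, which is why we refer to \cite{dat-jl}, is the fine structure of the simple character $\zeta$ and its normaliser: the identification $\bar A\simeq\Fm_{q^{f'd'}}^\times$, the relations $f'd'e'=d$ and $f'm=e_\tau$, the equality $|(J/U)^{\mathrm{ab}}|=a(\tau)$, together with the irreducibility of $r_l(\tilde\zeta)$, all resting on the detailed tame/level-zero classification of the representations of $D_{K,d}^\times$.
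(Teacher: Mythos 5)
The paper gives no proof of this proposition: it is quoted from \cite{dat-jl} (proposition 2.3.2), the only argument supplied being the preamble's construction of $\zeta$, of its normalisateur $N_\zeta$ and of the extension $\tilde \zeta$ via \cite{vigneras-apropos}, lemme 1.19. Your two-step Clifford-theoretic sketch reproduces exactly that setup and is consistent with the cited source and with the paper's subsequent remark that $r_l(\tilde \zeta_{|J})$ is irreducible, so it is essentially the same approach; like the paper, you rightly delegate the substantive structural and numerical identifications (the fine structure of $\zeta$, the invariants $f',d',e',m$ and the congruence counts) to \cite{dat-jl}.
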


\rem le diviseur $e'$ de $d$ correspond au diviseur $s$ de $d$ tel que
$\tau$ est de la forme $\pi[s]_D$, i.e. correspond par la correspondance
de Jacquet-Langlands ‡ la reprÈsentation de Steinberg gÈnÈralisÈe $\st_s(\pi)$
o˘ $\pi$ est une reprÈsentation irrÈductible cuspidale de $GL_g(K)$ avec $d=sg$.

\begin{notas} \label{nota-D} 
Suivant la proposition prÈcÈdente, on notera 
\begin{itemize}
\item $m(\bar \tau)=[N_\chi \cap N_{r_l(\chi)}:J]$,

\item $s(\bar \tau)$ la plus grande puissance de $l$ divisant $\frac{d}{m(\bar \tau)g(\bar \tau)}$,

\item $g_{-1}(\bar \tau):=g(\bar \tau):=\frac{d}{e'}=f'd'$ et

\item pour $0 \leq i \leq s(\bar \tau)$,  $g_i(\bar \tau)=m(\bar \tau)l^i g(\bar \tau)$.
\end{itemize}
\end{notas}

\rem pour $r_l(\tau)$, rappelons que comme $r_l(\tilde \zeta_{|J})$ est irrÈductible,
$r_l(\tau)$ est de longueur $[N_\zeta \cap N_{r_l(\chi)}:N_\zeta \cap N_\chi]$.
En particulier $r_l(\tau)$ est irrÈductible si et seulement si $N_\chi=N_{r_l(\chi)}$.

\begin{defi}
Soit $\bar \tau$ une $\overline \Fm_l$-reprÈsentation irrÈductible de $D_{K,d}^\times$ dont
le caractËre central est trivial sur $\varpi^\Zm \subset K^\times$.
On  dÈfinit 
$$\CC_{\bar \tau} \subset \rep_{\Zm_l^{nr}}^\oo(D_{K,d}^\times)$$
la sous-catÈgorie pleine formÈe des $\Zm_l^{nr}$-reprÈsentations de $D_{K,d}^\times$ dont tous les 
$\Zm^{nr}\DC_{K,d}^\times$-sous-quotients irrÈductibles sont isomorphes ‡
un sous-quotient de $\bar \tau_{|\DC_{K,d}^\times}$.
\end{defi}

\begin{prop} (cf. \cite{dat-torsion} \S B.2) \label{prop-scindage} \\
Soit $P_{\bar \tau^0}$ une enveloppe projective de $\bar \tau^0$ dans
$\rep^\oo_{\Zm_l^{nr}}(\DC_{K,d}^\times)$. Alors la sous-catÈgorie 
$\CC_{\bar \tau}$ est facteur direct dans $\rep_{\Zm_l^{nr}}^\oo(D_{K,d}^\times)$
pro-engendrÈe par l'induite $P_{\bar \tau}:=\ind_{\DC_{K,d}^\times}^{D_{K,d}^\times} (P_{\bar \tau^0})$.
\end{prop}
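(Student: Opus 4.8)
The plan is to derive the statement from the block decomposition of $\rep^\oo_{\Zm_l^{nr}}(D_{K,d}^\times)$, built from that of $\rep^\oo_{\Zm_l^{nr}}(\DC_{K,d}^\times)$ along the open normal inclusion $\DC_{K,d}^\times\hookrightarrow D_{K,d}^\times$, for which $D_{K,d}^\times/\DC_{K,d}^\times\simeq\Zm$ via $\val\circ\nrd$. First I would treat $\DC_{K,d}^\times$. Put $N=1+\PC_{K,d}$, an open normal pro-$p$ subgroup with $\DC_{K,d}^\times/N\simeq\Fm_{q^d}^\times$. Since $l\neq p$, one has $H^i(N,M)=0$ for every $i>0$ and every smooth $\Zm_l^{nr}[N]$-module $M$ --- passing to open normal subgroups of $N$ reduces to finite $p$-groups, where each higher cohomology group is killed by a power of $p$ and hence by a unit of $\Zm_l^{nr}$ --- so the functor $(-)^N$ is exact. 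Combining this with the fact that $\Fm_{q^d}^\times$ is abelian of order prime to $p$, a Clifford-theoretic computation yields $\Ext^1_{\DC_{K,d}^\times}(\bar\sigma,\bar\sigma')=0$ for any two non-isomorphic simple $\bar\Fm_l$-representations $\bar\sigma,\bar\sigma'$: when $\bar\sigma$ and $\bar\sigma'$ share no irreducible constituent over $N$, already $\hom_N(\bar\sigma,\bar\sigma')=0=H^1(N,\hom_{\bar\Fm_l}(\bar\sigma,\bar\sigma'))$, so the Hochschild--Serre sequence for $N\trianglelefteq\DC_{K,d}^\times$ gives the vanishing; otherwise, writing both as $\ind_{N_\zeta}^{\DC_{K,d}^\times}(\tilde\zeta\otimes\chi)$ for a common $\zeta$ as in the discussion preceding Proposition \ref{prop-repD}, Frobenius reciprocity and the Mackey formula reduce the $\Ext^1$ to $H^1$ of a subgroup of $\Fm_{q^d}^\times$ with coefficients in the non-trivial $\bar\Fm_l$-character $\chi'\chi^{-1}$, which vanishes. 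Since every simple object of $\rep^\oo_{\Zm_l^{nr}}(\DC_{K,d}^\times)$ is killed by $l$, these $\Ext^1$-groups coincide with their $\bar\Fm_l$-analogues, which is enough to conclude that $\rep^\oo_{\Zm_l^{nr}}(\DC_{K,d}^\times)=\prod_{[\bar\sigma]}\BC_{\bar\sigma}$, where $\BC_{\bar\sigma}$ is the full subcategory of objects all of whose simple subquotients are isomorphic to $\bar\sigma$ and is pro-generated by the projective cover $P_{\bar\sigma}$ of $\bar\sigma$.

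Next I would transport this decomposition to $D_{K,d}^\times$. Conjugation makes $D_{K,d}^\times$ act, with finite orbits, on the set of isomorphism classes of simple $\bar\Fm_l$-representations of $\DC_{K,d}^\times$, hence on the set of blocks. For $V\in\rep^\oo_{\Zm_l^{nr}}(D_{K,d}^\times)$ the canonical decomposition $V|_{\DC_{K,d}^\times}=\bigoplus_{[\bar\sigma]}V_{\bar\sigma}$ is permuted by this action, so for each orbit $\Omega$ the $\Zm_l^{nr}$-submodule $V_\Omega:=\bigoplus_{[\bar\sigma]\in\Omega}V_{\bar\sigma}$ is $D_{K,d}^\times$-stable and $V=\bigoplus_\Omega V_\Omega$. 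This gives $\rep^\oo_{\Zm_l^{nr}}(D_{K,d}^\times)=\prod_\Omega\CC_\Omega$, where $\CC_\Omega$ is the full subcategory of those $V$ with $V|_{\DC_{K,d}^\times}\in\prod_{[\bar\sigma]\in\Omega}\BC_{\bar\sigma}$. By Clifford theory $\bar\tau|_{\DC_{K,d}^\times}$ is a sum of $D_{K,d}^\times$-conjugates of $\bar\tau^0$, so the set of its simple constituents is exactly the orbit $\Omega(\bar\tau^0)$ of $\bar\tau^0$; hence $\CC_{\bar\tau}=\CC_{\Omega(\bar\tau^0)}$ is one of these factors, i.e. a direct factor of $\rep^\oo_{\Zm_l^{nr}}(D_{K,d}^\times)$.

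It then remains to identify the pro-generator. The object $P_{\bar\tau}=\ind_{\DC_{K,d}^\times}^{D_{K,d}^\times}P_{\bar\tau^0}$ is projective, as $\ind$ (compact induction from an open subgroup) is left adjoint to the exact restriction functor and $P_{\bar\tau^0}$ is projective over $\DC_{K,d}^\times$; it is finitely generated over $D_{K,d}^\times$, since $P_{\bar\tau^0}$ is finitely generated over the open subgroup $\DC_{K,d}^\times$ and $D_{K,d}^\times/\DC_{K,d}^\times$ is discrete. By the Mackey formula and normality of $\DC_{K,d}^\times$, $P_{\bar\tau}|_{\DC_{K,d}^\times}\simeq\bigoplus_{g\in D_{K,d}^\times/\DC_{K,d}^\times}{}^{g}P_{\bar\tau^0}$, whose simple constituents all lie in $\Omega(\bar\tau^0)$, so $P_{\bar\tau}\in\CC_{\bar\tau}$. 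Finally, for $0\neq V\in\CC_{\bar\tau}$ the restriction $V|_{\DC_{K,d}^\times}$ is non-zero and $D_{K,d}^\times$-stable, and $D_{K,d}^\times$ acts transitively on $\Omega(\bar\tau^0)$, so the component $V_{\bar\tau^0}$ is non-zero; since $P_{\bar\tau^0}$ pro-generates $\BC_{\bar\tau^0}$ this forces $\hom_{\DC_{K,d}^\times}(P_{\bar\tau^0},V_{\bar\tau^0})\neq0$, whence by adjunction $\hom_{D_{K,d}^\times}(P_{\bar\tau},V)\simeq\hom_{\DC_{K,d}^\times}(P_{\bar\tau^0},V|_{\DC_{K,d}^\times})\neq0$. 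Together with projectivity and finite generation, this shows that $P_{\bar\tau}$ is a pro-generator of $\CC_{\bar\tau}$.

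The step I expect to be the main obstacle is the first one: showing that non-isomorphic simple $\bar\Fm_l$-representations of $\DC_{K,d}^\times$ lie in distinct blocks. Everything afterwards is formal bookkeeping with block decompositions, the Mackey formula and adjunctions; but the $\Ext^1$-vanishing genuinely uses the arithmetic of $\DC_{K,d}^\times$ --- its presentation as an extension of the finite abelian group $\Fm_{q^d}^\times$, of order prime to $p$ yet possibly divisible by $l$, by the pro-$p$ group $1+\PC_{K,d}$, whose smooth $\Zm_l^{nr}$-modules are $l$-cohomologically trivial precisely because $l\neq p$.
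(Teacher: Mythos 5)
Your proposal cannot be compared with a proof in the paper, because the paper gives none: the proposition is quoted from \cite{dat-torsion}, appendice B.2. Judged on its own, your reconstruction is correct in substance and is the expected argument: the genuine content is the block separation for $\DC_{K,d}^\times$, i.e. the vanishing of $\Ext^1$ between non-isomorphic simple objects, which you get from the cohomological triviality of the pro-$p$ group $1+\PC_{K,d}$ (since $l\neq p$) together with Clifford theory over the cyclic quotient $\Fm_{q^d}^\times$; the passage to $D_{K,d}^\times$ by grouping the $\DC_{K,d}^\times$-blocks into conjugation orbits (finite, since conjugation by a uniformizer of $D$ becomes inner after $d$ steps, $\varpi_D^d$ being central times a unit), and the identification of $\ind_{\DC_{K,d}^\times}^{D_{K,d}^\times}P_{\bar\tau^0}$ as a projective pro-generator via adjunction and Mackey, are correct formal steps. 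One small point should be said explicitly: in your Clifford reduction the group $N_\zeta/(1+\PC_{K,d})$ is cyclic of order possibly divisible by $l$, so the vanishing of $H^1$ with coefficients in a non-trivial $\bar\Fm_l$-character is not the naive ``order invertible'' argument; it holds because such a character is trivial on the $l$-part and non-trivial on the prime-to-$l$ part, whose higher cohomology vanishes and whose invariants in the relevant module are zero.

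The step that is genuinely glossed over is the phrase ``which is enough to conclude that $\rep^\oo_{\Zm_l^{nr}}(\DC_{K,d}^\times)=\prod_{[\bar\sigma]}\BC_{\bar\sigma}$''. Vanishing of $\Ext^1$ between non-isomorphic simples lets you split objects that are (unions of) finite-length objects, but objects of $\rep^\oo_{\Zm_l^{nr}}(\DC_{K,d}^\times)$ are not locally of finite length over $\Zm_l^{nr}$: the trivial representation on $\Zm_l^{nr}$ has no non-zero finite-length subobject, so one cannot propagate the splitting subquotient by subquotient. To decompose an arbitrary smooth object one should argue at finite level, writing $V=\varinjlim_n V^{1+\PC_{K,d}^n}$ and using the central block idempotents of the finite group algebras $\Zm_l^{nr}[\DC_{K,d}^\times/(1+\PC_{K,d}^n)]$, which lift from $\bar\Fm_l$ because $\Zm_l^{nr}$ is Henselian, and checking compatibility in $n$; your $\Ext^1$ computation is then exactly what shows that each such block contains a unique simple, so that the resulting factors are your $\BC_{\bar\sigma}$ and are pro-generated by the projective envelopes. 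With this supplement (or by quoting Dat for it, as the paper does), the rest of your argument --- the orbit decomposition over $D_{K,d}^\times$, the semisimplicity of $\bar\tau|_{\DC_{K,d}^\times}$ by the Clifford argument for the discrete quotient $\Zm$, and the non-vanishing of $\hom_{D_{K,d}^\times}(P_{\bar\tau},V)$ for $0\neq V\in\CC_{\bar\tau}$ --- goes through as written.
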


\begin{nota} \label{nota-RC} Pour tout $d \geq 1$, on note
$\RC_{ \overline \Fm_l}(d)$ l'ensemble des classes d'Èquivalence des 
$\overline \Fm_l$-reprÈsentations irrÈductibles de $D_{v,d}^\times$ dont
le caractËre central est trivial sur $\varpi^\Zm \subset K^\times$.
\end{nota}

Ainsi toute $\Zm_l^{nr}$-reprÈsentation $V_{\Zm_l^{nr}}$ de $D_{K,d}^\times$
se dÈcompose en une somme directe
\begin{equation} \label{eq-tau-iso}
V_{\Zm_l^{nr}} \simeq \bigoplus_{\bar \tau \in \RC_{\overline \Fm_l}(d)} V_{\Zm_{l,\bar \tau}^{nr}}
\end{equation}
o˘ $V_{\Zm_{l,\bar \tau}^{nr}}$
est un objet de $\CC_{\bar \tau}$, i.e. tous ses sous-quotients irrÈductibles
sont isomorphes ‡ un sous-quotient de $\bar \tau_{|\DC_{K,d}^\times}$.

\begin{theo} (cf. \cite{dat-jl} 3.1.4) \\ \label{theo-jl}
Il existe une bijection
$$\Bigl \{ \overline \Fm_{l}-\hbox{reprÈsentations superspeh de }GL_{d}(K) \Bigr \}  \simeq 
\Bigl \{ \overline \Fm_{l}-\hbox{reprÈsentations irrÈductibles de } D^{\times}_{K,d} \Bigr \} $$
compatible ‡ la rÈduction modulo $l$ au sens suivant:
\begin{itemize}
\item soit $\varrho$ une $\overline \Fm_{l}$-reprÈsentation irrÈductible supercuspidale de $GL_{g}(K)$ avec $d=sg$;

\item soit $\pi$ un $\overline \Qm_{l}$-relËvement de $\varrho$.
\end{itemize}
Alors la rÈduction modulo $l$ de $\pi[s]_{D}$, notÈe $\bar \tau_{\varrho,t}$, 
est irrÈductible et correspond via la bijection ci-dessus ‡ la superspeh $\speh_{s}(\varrho)$.
\end{theo}

\rem avec les notations prÈcÈdentes, $g(\bar \tau)$ est Ègal au $g$ du $\varrho$ dans le thÈorËme
ci-avant o˘ $\varrho \in \scusp_{-1}(\bar \tau)$.

\begin{defi} \label{defi-tau-type}
On dira d'une $\overline \Fm_l$-reprÈsentation de $D^\times_{K,d}$ (resp. de $\DC_{K,d}^\times$) qu'elle
est de type $\varrho$ si tous ses constituants irrÈductibles sont, via la bijection prÈcÈdente, 
image d'une superSpeh $\speh_s(\varrho \otimes \chi \circ \det)$ o˘ $\chi$ est un caractËre non ramifiÈ
de $K^\times$. 
\end{defi}

\begin{nota}
On notera alors
$\RC_{\overline \Fm_l}(h,\varrho)$ le sous-ensemble de $\RC_{\overline \Fm_l}(h)$
constituÈ des $\bar \tau$ de type $\varrho$ ainsi que
$$\RC_{\overline \Fm_l}(\varrho)=\coprod_{h=tg(\varrho)} \RC_{\overline \Fm_l}(h,\varrho).$$
\end{nota}

\rem si $h$ n'est pas divisible par $g(\varrho)$ alors $\RC_{\overline \Fm_l}(\varrho)$ est vide.

\begin{nota} \label{nota-scusp}
Pour $1 \leq g$, on notera $\scusp_K(g)$ l'ensemble des classes d'Èquivalences inertielles
des $\overline \Fm_l$-reprÈsentation irrÈductibles supercuspidales de $GL_g(K)$.
Pour $\bar \tau \in \RC_{\overline \Fm_l}(g(\varrho),\varrho)$, on notera aussi
$$\scusp_K(\bar \tau)=\scusp_K(\varrho).$$
\end{nota}

\rem on notera aussi que $\bar \tau \in \RC_{\overline \Fm_l}(h)$ possËde exactement un type, i.e.
\begin{equation} \label{eq-tau-type}
\RC_{\overline \Fm_l}(h)=\coprod_{g | h} ~
\coprod_{\varrho \in \scusp_K(g)} \RC_{\overline \Fm_l}(h,\varrho).
\end{equation}

\noindent \textit{Exemple:} 
soit $\varrho$ une $\overline \Fm_l$-reprÈsentation irrÈductible supercuspidale de $GL_{g(\varrho)}(F_v)$. 
Pour tout $i \geq -1$, on considËre $\pi_i$ un relËvement de $\varrho_i$. Soit alors $t \geq 1$ et 
$\bar \tau$ la rÈduction modulo $l$ de $\pi_{-1}[t]_D$ laquelle est irrÈductible. Pour $i \geq 0$ et $t_i$ 
tel que  $t_ig_i(\varrho)=tg(\varrho)$, la reprÈsentation $\pi_i[t_i]_D$ 
(resp. tout sous-quotient irrÈductible de la rÈduction modulo $l$ de $\pi_i[t_i]_D$) 
appartient ‡ $\CC_{\bar \tau}$. 
RÈciproquement pour tout $\tau' \in \CC_{\bar \tau}$ une $\overline \Qm_l$-reprÈsentation
irrÈductible entiËre, il existe $i \geq -1$ et une reprÈsentation irrÈductible cuspidale
$\pi_i$ de $GL_{g_i(\varrho)}(K)$ dont le support supercuspidal de sa rÈduction modulo $l$
est un segment de Zelevinsky-VignÈras de longueur $m(\varrho)l^i$ et telle que
$\tau' \simeq \pi_i[\frac{d}{g_i(\varrho)}]_D$.

\begin{defi} \label{defi-tautype}
Suivant la discussion prÈcÈdente, on dira d'une $\overline \Qm_l$-reprÈsentation irrÈductible 
$\tau' \in \CC_{\bar \tau}$ qu'elle est de $\bar \tau$-type $i$.
On notera aussi $\scusp_i(\bar \tau)$ l'ensemble des classes d'Èquivalence de ces reprÈsentations
$\pi_i$ et $\scusp(\bar \tau)=\bigcup_{i \geq -1} \scusp_i(\bar \tau)$.
\end{defi}

\bibliographystyle{plain}
\bibliography{bib-ok}

\end{document}